\newcommand{\mylabel}[2]{#2\def\@currentlabel{#2}\label{#1}}
\DeclareMathOperator{\de}{deg}
\DeclareMathOperator{\SO}{SO}
\newcommand{\suv}{\sum_{ uv \in E(G)}}
\newcommand{\LE}{\vartriangleleft}
\newcommand{\Mi}{\mathcal{M}}
\DeclareMathOperator{\rr}{r}
\newcommand{\changefont}{%
    \fontsize{9}{12}\selectfont}
\newtheorem{thm}{Theorem}[section]
\newtheorem{lem}[thm]{Lemma}
\newtheorem{cor}[thm]{Corollary}
\theoremstyle{remark}
\newtheorem{rem}[thm]{Remark}
\newtheorem{defn}[thm]{Definition}
\newtheorem*{thm*}{Theorem}
\newif\ifdetails
\newcommand{\DETAIL}[1]%
{\ifdetails\par\fbox{\begin{minipage}{0.9\linewidth}\textit{Detail:}
      #1\end{minipage}}\par\fi}
\newcommand{\TODO}[1]%
{\ifdetails\par\fbox{\begin{minipage}{0.9\linewidth}\textbf{TODO:}
      #1\end{minipage}}\par\fi}
\newcommand{\old}[1]{{}}
\title{On the Sombor index of trees with degree restrictions}
\author{Eric O. D. Andriantiana}
\address{Eric O. D. Andriantiana\\
Department of Mathematics (Pure and Applied)\\
Rhodes University, PO Box 94\\
6140 Grahamstown\\
South Africa}
\email{E.Andriantiana@ru.ac.za}
\author{Valisoa R. M. Rakotonarivo}
\address{Valisoa R. M. Rakotonarivo\\
Department of Mathematics and Applied Mathematics\\
University of Pretoria\\
Hatfield 0028\\
South Africa}
\email{valisoa.rakotonarivo@up.ac.za}
\subjclass[2010]{Primary 05C35; secondary 05C05, 05C07}
\keywords{Sombor index, trees, extremal problems, degree sequence}
\begin{document}

\begin{abstract}
We study the Sombor index of trees with various degree restrictions. In addition to rediscovering that among all trees with a given degree sequence, the greedy tree minimises the Sombor index and the alternating greedy tree maximises it, we also provide a full characterisation of all trees that have those maximum or minimum values. Moreover, we compare trees with different degree sequences and deduce a few corollaries. 
%the characterisation of trees with given degree sequence that ha consider the Sombor index of graphs, a topological index that has been recently introduced by Gutman \cite{gutman2021}. We are particularly interested in the class of trees with a given degree restrictions. We characterize the trees that maximise the Sombor index in that class. 
\end{abstract}

\maketitle

\section{Introduction}
The Sombor index finds its roots in the exploration of molecular graphs, which represent the structure of chemical compounds as graphs with atoms as vertices and bonds as edges. Ivan Gutman, a pioneering figure in chemical graph theory, introduced this novel invariant in 2021 \cite{gutman2021}, which derives from the vertex degrees. 

All graphs considered in this paper are simple, finite and undirected. Let $G$ be a graph, with vertex set $V(G)$ and edge set $E(G)$. The Sombor index, denoted by $\SO(G)$ is defined as \begin{equation}
\SO(G)=\suv \sqrt{\left(\de_G(u)^2+\de_G(v)^2\right)},
\end{equation}
where $\de_G(u)$ is the degree of the vertex $u \in V(G)$, i.e. the number of edges adjacent to $u$. In a clear context, we may write $\de(u)$.

A generalized version, the $\alpha$-Sombor index of $G$ was defined in \cite{Tam2021} as follows:
\begin{equation}
\SO_{\alpha}(G)=\suv \left(\de_G(u)^2+\de_G(v)^2\right)^{\alpha}.
\end{equation}
The original Sombor index corresponds to $\alpha=1/2$.

This topological index holds significant implications to simulate some chemical properties of compounds. It provides valuable insights into their physical and chemical properties.

From a mathematical perspective, this invariant was widely popular in the recent couple of years. Extensive research has been conducted to investigate the behaviour of the Sombor index within the class of trees with various restrictions as seen in \cite{li2022,chen2022,damnjanovic2022} and for general graphs as well \cite{Wei2022,Tam2021}. 

In this paper, we focus on the set of all trees with given degree sequence $D=(d_1,d_2,$ $\dots,d_n)$, which we will denote by $\mathbb{T}_D$. It is not surprising that the extremal trees regarding the Sombor index are the well-known greedy trees and alternating greedy trees as proved in \cite{damnjanovic2022} and \cite{Wei2022}. However, they may not be unique depending on the sequence $D$, and our approach will indeed characterize all the extremal trees in $\mathbb{T}_D$. In addition, we will compare graphs with different degree sequences to deduce a few results related to other restrictions such as a given maximum degree, diameter, number of leaves, and number of branching points.

%We aim to characterize extremal graphs on the set $\mathbb{T}_D$ for the Sombor index.

\section{Preliminaries}

The following lemma describes how the Sombor index is affected by a swap of two branches. It will play a central role to the rest of the paper.
\begin{lem}\label{switching}
Let $T \in \mathbb{T}_D$. Consider $T$ as a (vertex/edge) rooted tree. Let $u,v \in V(T)$ such that $\de(u) < \de(v)$ and $z,w$ their respective children such that $\de(w)<\de(z)$. Let $T'=T-vw-uz+vz+uw$. Then $\SO(T')<\SO(T)$. 
\end{lem}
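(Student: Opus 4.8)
The plan is to exploit the fact that the swap $T\mapsto T'$ changes the endpoint degrees of only four edges while leaving every vertex degree intact, so that the whole comparison collapses to a single two-variable inequality.

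First I would record that $T'\in\mathbb{T}_D$. Each of $u,v,w,z$ retains its degree under the swap (every one of them merely trades one incident edge for another), and no other vertex is touched, so the degree sequence is preserved. Provided the branches rooted at $z$ and at $w$ are vertex-disjoint and contain neither $u$ nor $v$ — the only configuration in which $T-vw-uz+vz+uw$ is again a tree — the result is connected and acyclic, hence lies in $\mathbb{T}_D$. This bookkeeping, namely checking that the two swapped branches are genuinely independent so that $T'$ really is a tree, is the only combinatorial subtlety; I expect it to be the main obstacle, as the numerical inequality below is routine.

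Next, since the Sombor index depends on an edge only through the degrees of its endpoints, and since the internal structure (and all internal degrees) of the two swapped branches is unchanged, $\SO(T)$ and $\SO(T')$ agree on every edge except the four that are deleted or created. Writing $a=\de(u),\ b=\de(v),\ c=\de(w),\ d=\de(z)$, so that $a<b$ and $c<d$, the four relevant contributions give
\[
\SO(T)-\SO(T')=\Bigl(\sqrt{a^2+d^2}+\sqrt{b^2+c^2}\Bigr)-\Bigl(\sqrt{a^2+c^2}+\sqrt{b^2+d^2}\Bigr),
\]
and it remains to show this quantity is strictly positive.

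Finally I would settle the inequality by the submodularity of $F(x,y)=\sqrt{x^2+y^2}$. A direct computation gives $\partial^2 F/\partial x\,\partial y=-xy\,(x^2+y^2)^{-3/2}$, which is strictly negative for $x,y>0$; since all degrees are at least $1$, this holds throughout $[a,b]\times[c,d]$. Integrating the mixed partial over this rectangle,
\[
\SO(T)-\SO(T')=-\int_a^b\!\!\int_c^d \frac{\partial^2 F}{\partial x\,\partial y}\,dy\,dx=\int_a^b\!\!\int_c^d \frac{xy}{(x^2+y^2)^{3/2}}\,dy\,dx>0,
\]
the strictness coming from $a<b$ and $c<d$, which make the domain of positive area. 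Hence $\SO(T')<\SO(T)$. Equivalently, one can fix $b>a$ and observe that $y\mapsto\sqrt{b^2+y^2}-\sqrt{a^2+y^2}$ is strictly decreasing, so its value at $c$ exceeds its value at $d$. Either way the calculus is elementary; the one point to watch is that strict negativity of the mixed derivative needs positive degrees, which is automatic here.
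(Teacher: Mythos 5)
Your proof is correct, and while it starts from the same reduction as the paper, it settles the key inequality by a genuinely different argument. Both proofs observe that the swap preserves all vertex degrees, so only the four edges $uz$, $vw$, $vz$, $uw$ matter, and the claim collapses to $\sqrt{a^2+d^2}+\sqrt{b^2+c^2}>\sqrt{a^2+c^2}+\sqrt{b^2+d^2}$ with $a=\de(u)<b=\de(v)$ and $c=\de(w)<d=\de(z)$ (incidentally, your four-edge difference is written correctly, whereas the paper's displays carry a typo, $\de(w)^2+\de(z)^2$ where $\de(u)^2+\de(z)^2$ is meant). From there the paper is purely algebraic: $(b^2-a^2)(c^2-d^2)<0$ expands to $(b^2+d^2)(a^2+c^2)<(b^2+c^2)(a^2+d^2)$, and since both pairs have the same sum $a^2+b^2+c^2+d^2$, comparing $\left(\sqrt{A}+\sqrt{B}\right)^2$ with $\left(\sqrt{C}+\sqrt{D}\right)^2$ reduces to comparing the products $AB$ and $CD$; no calculus is used. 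You instead invoke strict submodularity of $F(x,y)=\sqrt{x^2+y^2}$: the mixed partial $-xy(x^2+y^2)^{-3/2}$ is strictly negative on $[a,b]\times[c,d]$ (degrees are at least $1$), and integrating it over this rectangle of positive area gives exactly the strict inequality; your alternative formulation via the strict decrease of $y\mapsto\sqrt{b^2+y^2}-\sqrt{a^2+y^2}$ is essentially the monotonicity fact the paper only imports later as Lemma \ref{lemChen} for the proof of Lemma \ref{movingbig}. What each approach buys: the paper's identity is self-contained and entirely elementary, while yours transfers verbatim to any symmetric edge function with negative mixed partial, so it immediately yields the same switching lemma for the $\alpha$-Sombor index $(x^2+y^2)^{\alpha}$ for all $0<\alpha<1$, which the algebraic trick does not give as directly. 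Finally, your explicit check that the two swapped branches are disjoint (so that $T'$ is really a tree in $\mathbb{T}_D$) addresses a point the paper leaves implicit; in all of the paper's applications $u$ and $v$ are on the same level, where this is automatic, so it is a sensible clarification rather than a discrepancy.
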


\begin{proof}
Note that the transformation from $T$ to $T'$ preserves the degree sequence, and that the contribution to the Sombor index from any other edges than $vw, uz, vz,uw$  will not be affected. Hence, we are left to show that
\begin{align*}\label{diff}
\SO(T')-\SO(T)&=\sqrt{\de(v)^2+\de(z)^2}+\sqrt{\de(u)^2+\de(w)^2}\\
&-\sqrt{\de(v)^2+\de(w)^2}-\sqrt{\de(w)^2+\de(z)^2} <0.
\end{align*}
From the assumption, we have
$$
(\de(v)^2-\de(u)^2)(\de(w)^2-\de(z)^2)<0,
$$
which expands to
\begin{align*}
&\de(v)^2\de(w)^2+\de(z)^2\de(u)^2-\de(v)^2\de(z)^2-\de(w)^2\de(u)^2\\
&= (\de(v)^2+\de(z)^2)(\de(u)^2+\de(w)^2)-(\de(v)^2+\de(w)^2)(\de(u)^2+\de(z)^2)<0.
\end{align*} 
This is equivalent to 
\begin{align*}
&\sqrt{(\de(v)^2+\de(z)^2)(\de(u)^2+\de(w)^2)}-\sqrt{(\de(v)^2+\de(w)^2)(\de(u)^2+\de(z)^2)}\\
&=\frac{1}{2}\left[\left(\sqrt{\de(v)^2+\de(z)^2}+\sqrt{\de(u)^2+\de(w)^2}\right)^2\right.\\
&\hspace{5.5cm} \left.-\left(\sqrt{\de(v)^2+\de(w)^2}+\sqrt{\de(w)^2+\de(z)^2}\right)^2\right]<0.
\end{align*}
and
\begin{align*}
&\sqrt{\de(v)^2+\de(z)^2}+\sqrt{\de(u)^2+\de(w)^2}-\sqrt{\de(v)^2+\de(w)^2}-\sqrt{\de(w)^2+\de(z)^2}\\&<0.
\end{align*}
\end{proof}

\begin{rem}\label{same}
If, in Lemma \ref{switching}, we allow $\de(u)=\de(v)$ or $\de(w)=\de(z)$ , then the Sombor index remains the same.
\end{rem}

\section{Trees of order $n$ with the same degree sequence}

Let $D=(d_1,d_2,\dots,d_n)$ be a degree sequence of an $n$-vertex tree. In this section, we are studying the Sombor index of trees in $T \in \mathbb{T}_D$. Wei and Liu \cite{Wei2022} compared the Sombor indices of graphs with given degree sequence. The fact that $G(D)$ and $\mathcal{M}(D)$, defined below, are extremal graphs among all elements of $\mathbb{T}_D$ is a special case of their findings. Our main contribution is not only to provide a simple proof of the extremality of these two graphs, but to characterise all of the extremal graphs regarding the Sombor index.

For completeness, we recall the following popular definitions of greedy trees and level greedy trees, see \cite{schm12,wang08,zhang08b}.

\begin{defn}
Let $T$ be a rooted tree, where the maximum distance from the root is $k-1$. The leveled degree sequence of $T$ is the sequence 
\begin{equation}
\label{Eq:D}
D=(V_1, \dots,V_k),
\end{equation}
where, for any $1\leq i \leq k$, $V_i$ is the non-increasing sequence formed by the degrees of the 
vertices of $T$ at the $i^{\text{th}}$ level (i.e., vertices of distance $i-1$ from the root in the respective component). 
\end{defn}
\begin{defn}
\label{Def:lgf}
The \emph{level greedy tree} with leveled degree sequence 
\begin{equation}\label{eq:ldegseq}
D=((i_{1,1},\dots,i_{1,k_1}), (i_{2,1},\dots,i_{2,k_2}),\dots,(i_{n,1},\dots,i_{n,k_n})),
\end{equation}
where $1\leq k_1 \leq 2$,
is obtained using the following ``greedy algorithm'': 
\begin{enumerate}
\item[(i)] Label the vertices of the first level by $g_1^1,\dots,g^1_{k_1}$, and assign degrees $i_{1,1},\dots,i_{1,k_1}$ 
to them. Add the edge $g^1_1g^1_2$ if $k=2$.

\item[(ii)] Assume that the vertices of the $h^{\text{th}}$ level have been labeled 
$g_1^h,\dots,g_{k_h}^h$ and a degree has been assigned to each of them. Then for all 
$1\leq j \leq k_h$ label the neighbors of $g^h_j$ at the $(h+1)^{\text{th}}$ level, if any, 
by $$g_{1+\sum_{m=1}^{j-1}(i_{h,m}-1)}^{h+1},\dots,g^{h+1}_{\sum_{m=1}^{j}(i_{h,m}-1)},$$ and assign 
degrees to the newly labeled vertices such that $\deg g_j^{h+1} = i_{h+1,j}$ for all $j$.
Unless $h=1$, in which case all the $g^{h+1}_j$s are set to be neighbors of $g^h_1.$
\end{enumerate}
%To avoid possible confusion we sometimes use $g_j^i(D)$ instead of just $g_j^i.$ 
The level greedy tree with leveled degree sequence $D$ is denoted by $G(D)$.
\end{defn}

\begin{defn}
\label{Def:gf}
If a root in a tree can be chosen such that it becomes a level greedy tree whose 
leveled degree sequence, as given in \eqref{eq:ldegseq},
satisfies 
$$\min (i_{j,1},\dots,i_{j,k_j})\geq \max (i_{j+1,1},\dots,i_{j+1,k_{j+1}})$$
for all $1\leq j\leq n-1$, then it is called a \emph{greedy tree}. %(Figure~\ref{fig:greedy_tree}).
%%%%%%%%%%%%%%%%%%%%%%STOP
 Even in the case that $D$ is a degree sequence (instead of a leveled degree sequence), we keep the notation $G(D)$ to denote the greedy tree with degree sequence $D$.
\end{defn}

We also need to recall the following definition of alternating greedy trees \cite{andriantiana2013}, and we will introduce the notion of level alternating greedy trees.

Let $T_1,\dots,T_k$ be rooted trees. Then $[T_1,\dots,T_k]$ is a tree with root $v$, obtained by adding an edge to join $v$ to the root of $T_i$ for all $i$. 
\begin{defn}
A complete branch $T_u$ of a rooted tree $T$ is the induced subgraph of $T$ spanned by $u$ and all its descendant.

A complete branch $B=[B_1,\dots,B_k]$ of a tree $T$ is a \textit{pseudo-leaf} if $|V(B_1)|=|V(B_2)|=\cdots=|V(B_k)|=1$: the branches attached to the root $\rr(B)$ of $B$ are all leaves. 
\end{defn}

We simply write $[d]$ for a pseudo-leaf branch with $d$ vertices, and hence has a root of degree $d-1$.

\begin{defn}
Let $(d_1,\dots,d_t,1,\dots,1)$ be the degree sequence of a tree $T$, where $d_j \geq 2$ for $1\leq j \leq t$. The $t$-tuple $(d_1,\dots,d_t)$ is called the \textit{reduced degree sequence} of $T$.
\end{defn}

No information is lost by passing from a degree sequence of a tree to its reduced degree sequence. Using the Handshake Lemma, the number of leaves in a tree $T$ with reduced degree sequence $(d_1,\dots,d_t)$ can be recovered as
$$
k=-2t+2+\sum_{i=1}^td_i.
$$
%So, the number of leaves is already fixed by the number of vertices and the $d_i$s from the reduced degree sequence. It implies that 
Two trees with the same reduced degree sequence have the same number of leaves, therefore they have the same degree sequence. 

\begin{defn}
Let $(d_1,\dots,d_t)$ be a reduced degree sequence of a tree. If $t\leq d_t+1$, then $\Mi(d_1,\dots,d_t)$ is the tree obtained by merging the root of each of $[d_1],\dots,[d_{t-1}]$ with a leaf of $[1+d_t]$, respectively. We label all non-leaf vertices as shown in Figure \ref{fig2}, in such a way that
\begin{equation}
\de(v_i) \leq \de(v_j)\quad \text{if }\, i <j.\label{eq4}
\end{equation}
%At this point all non-leaf vertices are labelled.

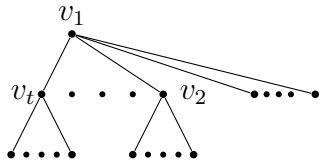
\begin{figure}[H]
$$
\begin{tikzpicture}[scale=0.8]
\node[fill=black,circle,inner sep=1pt] (v1) at (7,0) {};
\node[fill=black,circle,inner sep=1pt] (v2) at (6.5,-1) {};
\node[fill=black,circle,inner sep=1pt] (v3) at (8.5,-1) {};
\node[fill=black,circle,inner sep=1pt] (v4) at (10,-1) {};
\node[fill=black,circle,inner sep=1pt] (v5) at (11,-1) {};
\node[fill=black,circle,inner sep=1pt] (v6) at (6,-2) {};
\node[fill=black,circle,inner sep=1pt] (v7) at (7,-2) {};
\node[fill=black,circle,inner sep=1pt] (v8) at (8,-2) {};
\node[fill=black,circle,inner sep=1pt] (v9) at (9,-2) {};
\draw (v1)--(v2);
\draw (v1)--(v3);
\draw (v1)--(v4);
\draw (v1)--(v5);
\draw (v2)--(v6);
\draw (v2)--(v7);
\draw (v3)--(v8);
\draw (v3)--(v9);
\node[fill=black,circle,inner sep=0.8pt]  at (7,-1) {};
\node[fill=black,circle,inner sep=0.8pt]  at (7.5,-1) {};
\node[fill=black,circle,inner sep=0.8pt]  at (8,-1) {};
\node[fill=black,circle,inner sep=0.8pt]  at (10.2,-1) {};
\node[fill=black,circle,inner sep=0.8pt]  at (10.4,-1) {};
\node[fill=black,circle,inner sep=0.8pt]  at (10.6,-1) {};
\node[fill=black,circle,inner sep=0.8pt]  at (6.25,-2) {};
\node[fill=black,circle,inner sep=0.8pt]  at (6.5,-2) {};
\node[fill=black,circle,inner sep=0.8pt]  at (6.75,-2) {};
\node[fill=black,circle,inner sep=0.8pt]  at (8.25,-2) {};
\node[fill=black,circle,inner sep=0.8pt]  at (8.5,-2) {};
\node[fill=black,circle,inner sep=0.8pt]  at (8.75,-2) {};
\node at (7,0.3) {$v_1$};
\node at (6.2,-1) {$v_t$};
\node at (9,-1) {$v_2$};
\end{tikzpicture}
$$
\caption{Labelling of the vertices of $\Mi(d_1,\dots,d_t)$ when $t\leq d_t+1$.}
\label{fig2}
\end{figure}

On the other hand, if $t \geq d_t +2$, we construct $\Mi(d_1,\dots,d_t)$ recursively: let $\ell$ be the greatest integer such that $v_{\ell}$ is a label in $\Mi(d_{d_t},\dots,d_{t-1})$. Let $s$ be the smallest integer such that $v_s$ is adjacent to a leaf in $\Mi(d_{d_t},\dots,d_{t-1})$. Let $R_{d_t}=[[d_1],\dots,[d_{d_t-1}]]$, where the pseudo-leaves are labelled $v_{\ell+1},\dots,$ 
$v_{\ell+d_t-1}$ still respecting \eqref{eq4}. $\Mi(d_1,\dots,d_t)$ is the tree obtained by merging the root of $R_{d_t}$ to a leaf adjacent to $v_s$.

\begin{figure}[H]
$$
\begin{tikzpicture}[scale=0.7]
\node[fill=black,circle,inner sep=1pt] (t1) at (0,0) {};
\node[fill=black,circle,inner sep=1pt] (t2) at (-2,-1) {};
\node[fill=black,circle,inner sep=1pt] (t3) at (0,-1) {};
\node[fill=black,circle,inner sep=1pt] (t4) at (2,-2) {};
\node[fill=black,circle,inner sep=1pt] (t5) at (-2.5,-2) {};
\node[fill=black,circle,inner sep=1pt] (t6) at (-2,-2) {};
\node[fill=black,circle,inner sep=1pt] (t7) at (-1.5,-2) {};
\node[fill=black,circle,inner sep=1pt] (t8) at (-0.5,-2) {};
\node[fill=black,circle,inner sep=1pt] (t9) at (0.5,-2) {};
\draw (t1)--(t2);
\draw (t1)--(t3);
\draw (t1)--(t4);
\draw (t2)--(t5);
\draw (t2)--(t6);
\draw (t2)--(t7);
\draw (t3)--(t8);
\draw (t3)--(t9);
\node at (0,0.3) {$v_1$};
\node at (-2.3,-1) {$v_3$};
\node at (0.3,-1) {$v_2$};
\draw[->, line width=1pt] (2,-1)--(2.6,-1);
\node[fill=black,circle,inner sep=1pt] (s1) at (5.5,0) {};
\node[fill=black,circle,inner sep=1pt] (s2) at (3.5,-1) {};
\node[fill=black,circle,inner sep=1pt] (s3) at (5.5,-1) {};
\node[fill=black,circle,inner sep=1pt] (s4) at (7.5,-2) {};
\node[fill=black,circle,inner sep=1pt] (s5) at (3,-2) {};
\node[fill=black,circle,inner sep=1pt] (s6) at (3.5,-2) {};
\node[fill=black,circle,inner sep=1pt] (s7) at (4,-2) {};
\node[fill=black,circle,inner sep=1pt] (s8) at (5,-2) {};
\node[fill=black,circle,inner sep=1pt] (s9) at (6,-2) {};
\node[fill=black,circle,inner sep=1pt] (s10) at (7,-3) {};
\node[fill=black,circle,inner sep=1pt] (s11) at (8,-3) {};
\node[fill=black,circle,inner sep=1pt] (s12) at (6.7,-4) {};
\node[fill=black,circle,inner sep=1pt] (s13) at (7,-4) {};
\node[fill=black,circle,inner sep=1pt] (s14) at (7.3,-4) {};
\node[fill=black,circle,inner sep=1pt] (s15) at (7.7,-4) {};
\node[fill=black,circle,inner sep=1pt] (s16) at (8,-4) {};
\node[fill=black,circle,inner sep=1pt] (s17) at (8.3,-4) {};
\draw (s1)--(s2);
\draw (s1)--(s3);
\draw (s1)--(s4);
\draw (s2)--(s5);
\draw (s2)--(s6);
\draw (s2)--(s7);
\draw (s3)--(s8);
\draw (s3)--(s9);
\draw (s4)--(s10);
\draw (s4)--(s11);
\draw (s10)--(s12);
\draw (s10)--(s13);
\draw (s10)--(s14);
\draw (s11)--(s15);
\draw (s11)--(s16);
\draw (s11)--(s17);
\node at (5.5,0.3) {$v_1$};
\node at (3.2,-1) {$v_3$};
\node at (5.8,-1) {$v_2$};
\node at (8.3,-3) {$v_4$};
\node at (6.7,-3) {$v_5$};
\draw[->, line width=1pt] (7.5,-1)--(8,-1);
\node[fill=black,circle,inner sep=1pt] (v1) at (11,0) {};
\node[fill=black,circle,inner sep=1pt] (v2) at (9,-1) {};
\node[fill=black,circle,inner sep=1pt] (v3) at (11,-1) {};
\node[fill=black,circle,inner sep=1pt] (v4) at (13.5,-2) {};
\node[fill=black,circle,inner sep=1pt] (v5) at (8.5,-2) {};
\node[fill=black,circle,inner sep=1pt] (v6) at (9,-2) {};
\node[fill=black,circle,inner sep=1pt] (v7) at (9.5,-2) {};
\node[fill=black,circle,inner sep=1pt] (v8) at (10.5,-2) {};
\node[fill=black,circle,inner sep=1pt] (v9) at (11.5,-2) {};
\node[fill=black,circle,inner sep=1pt] (v10) at (13,-3) {};
\node[fill=black,circle,inner sep=1pt] (v11) at (14,-3) {};
\node[fill=black,circle,inner sep=1pt] (v12) at (12.7,-4) {};
\node[fill=black,circle,inner sep=1pt] (v13) at (13,-4) {};
\node[fill=black,circle,inner sep=1pt] (v14) at (13.3,-4) {};
\node[fill=black,circle,inner sep=1pt] (v15) at (13.7,-4) {};
\node[fill=black,circle,inner sep=1pt] (v16) at (14,-4) {};
\node[fill=black,circle,inner sep=1pt] (v17) at (14.3,-4) {};
\node[fill=black,circle,inner sep=1pt] (v18) at (11.5,-3) {};
\node[fill=black,circle,inner sep=1pt] (v19) at (11,-4) {};
\node[fill=black,circle,inner sep=1pt] (v20) at (11.3,-4) {};
\node[fill=black,circle,inner sep=1pt] (v21) at (11.6,-4) {};
\node[fill=black,circle,inner sep=1pt] (v22) at (11.9,-4) {};
\draw (v1)--(v2);
\draw (v1)--(v3);
\draw (v1)--(v4);
\draw (v2)--(v5);
\draw (v2)--(v6);
\draw (v2)--(v7);
\draw (v3)--(v8);
\draw (v3)--(v9);
\draw (v4)--(v10);
\draw (v4)--(v11);
\draw (v10)--(v12);
\draw (v10)--(v13);
\draw (v10)--(v14);
\draw (v11)--(v15);
\draw (v11)--(v16);
\draw (v11)--(v17);
\draw (v9)--(v18);
\draw (v18)--(v19);
\draw (v18)--(v20);
\draw (v18)--(v21);
\draw (v18)--(v22);
\node at (11,0.3) {$v_1$};
\node at (8.7,-1) {$v_3$};
\node at (11.3,-1) {$v_2$};
\node at (14.3,-3) {$v_4$};
\node at (12.7,-3) {$v_5$};
\node at (11.2,-3) {$v_6$};
\end{tikzpicture}
$$
\caption{Construction of $\Mi(5,4,4,4,3,3,3,2)$.}
\label{cha5.fig3}
\end{figure}
\end{defn}

\begin{defn}
\label{Def:Algf}
The \emph{alternating level greedy tree} with leveled degree sequence 
\begin{equation}\label{eq:aldegseq}
D=((i_{1,1},\dots,i_{1,k_1}), (i_{2,1},\dots,i_{2,k_2}),\dots,(i_{n,1},\dots,i_{n,k_n})), 
\end{equation}
where $1\leq k_1 \leq 2$,
is obtained using the following algorithm: 
\begin{enumerate}
\item[(i)] Label the vertices of the first level by $g_1^1,\dots,g_{k_1}^1$, and assign degrees 
to these vertices such that $\deg g_j^1 = i_{1,j}$ for all $j$.
Add an edge $g^1_1g^1_2$ if $k_1=2$.

\item[(ii)] Assume that the vertices of the $h^{\text{th}}$ level have been labeled 
$g_1^h,\dots,g_{k_h}^h$ and a degree has been assigned to each of them. Then for all 
$1\leq j \leq k_h$ label the neighbors of $g^h_j$ at the $(h+1)^{\text{th}}$ level, if any, 
by 
$$
g_{1+\sum_{m=1}^{j-1}(\deg(g^h_m)-1)}^{h+1},\dots,g^{h+1}_{\sum_{m=1}^{j}(\deg(g^h_m)-1)},$$ and assign 
degrees to the newly labeled vertices such that $\deg g_j^{h+1} = i_{h+1,j}$ for all $j$ if $h$ is even and $\deg g_j^{h+1} = %k_{h+1}-i_{h+1,j}
i_{h+1,k_{h+1}-j+1}$ if $h$ is odd.

Unless $h=1$, in which case all the $g^{h+1}_j$s are set to be neighbors of $g^h_1.$
\end{enumerate}
\end{defn}
As we pass from one level to the next, in an alternating level greedy tree, edges are being added between vertices of largest degree available in level $h$ to vertices of smallest degree available in level $h+1$.

\subsection{Elements of $\mathbb{T}_D$ with maximum Sombor index}
This subsection provides a full characterisation of all trees of degree sequence $D$ that has the maximum value of the Sombor index.

%We assume that the $d_i$s in $D$ are in a non-increasing order, i.e., $d_1 \geq d_2 \geq \cdots \geq d_n$. We use similar terminology and notation as in \cite{andriantiana2013}.

We say a tree to be a ``maximal" tree if it maximises the Sombor index among all trees in $\mathbb{T}_D$. If $v$ is a vertex of a rooted tree $T$, then $T_v$ is the subtree of $T$ induced by $v$ and all its descendants.

\begin{lem} \label{reverse}
Let $T \in \mathbb{T}_D$. If $T$ is maximal, then the following holds for any level number $i$ and any choice of vertex or edge root $r$ of $T$. %We consider $T$ to be rooted at any vertex or edge. 
Let $v_1,\dots,v_k$ be vertices at level $i$ and $u_m^1,\dots,u_m^{d_{v_m-1}}$ the respective children of $v_m$. 
%$T$ is maximal if it satisfies the following: 
If, for some $s,r\in\{1,\dots,k\}$, we have $ \de(v_p) > \de(v_r)$, then 
\begin{equation}
\label{Eq:vhvt}
\max_{j} \de({u_{p}}^j) \leq \min_{j}\de({u_{r}}^j).
\end{equation}
\end{lem}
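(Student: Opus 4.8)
The plan is to argue by contradiction using the switching Lemma~\ref{switching}, in the direction that \emph{increases} the Sombor index. Suppose $T$ is maximal yet the conclusion fails: there are a level $i$, a choice of (vertex or edge) root, and indices $p,r$ with $v_p,v_r$ at level $i$, $\de(v_p)>\de(v_r)$, and $\max_j \de(u_p^j) > \min_j \de(u_r^j)$ (the negation of \eqref{Eq:vhvt}). If $v_p$ or $v_r$ had no child, then \eqref{Eq:vhvt} would hold vacuously, so I may assume both have children. I then pick a child $z$ of $v_p$ with $\de(z)=\max_j\de(u_p^j)$ and a child $w$ of $v_r$ with $\de(w)=\min_j\de(u_r^j)$, so that $\de(z)>\de(w)$.

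Next I would carry out the branch swap $T^*=T-v_pz-v_rw+v_pw+v_rz$, which detaches the complete branch $T_z$ from $v_p$ and the complete branch $T_w$ from $v_r$, then reattaches $z$ below $v_r$ and $w$ below $v_p$. Since $v_p$ and $v_r$ sit at the same level $i$, neither is a descendant of the other, so $T_z$ and $T_w$ are vertex-disjoint; hence the swap yields a genuine tree, and since it preserves every vertex degree we have $T^*\in\mathbb{T}_D$.

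Finally I apply Lemma~\ref{switching} to $T^*$ with $u=v_r$ and $v=v_p$ (so $\de(u)<\de(v)$), taking the child $z$ of $u$ and the child $w$ of $v$ in $T^*$, which satisfy $\de(w)<\de(z)$. The transformation $T^*-vw-uz+vz+uw$ prescribed by the lemma restores exactly the edges $v_pz$ and $v_rw$, hence returns $T$, and the lemma yields $\SO(T)<\SO(T^*)$. This contradicts the maximality of $T$, and the statement follows.

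I expect the main obstacle to be purely bookkeeping rather than conceptual: keeping track of which branch is reattached where, and matching the two strict inequalities $\de(v_r)<\de(v_p)$ and $\de(w)<\de(z)$ to the exact hypotheses of Lemma~\ref{switching} so that its swap returns $T$ and not some other tree. The one genuinely load-bearing point is the vertex-disjointness of $T_z$ and $T_w$, which legitimizes the swap; this is precisely guaranteed by $v_p$ and $v_r$ lying at a common level, and it also covers the edge-rooted case, where the first level consists of the two endpoints of the root edge whose branches remain disjoint.
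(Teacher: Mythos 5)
Your proof is correct and takes essentially the same approach as the paper: the paper's own (two-sentence) proof likewise derives a contradiction with maximality by applying the branch swap of Lemma~\ref{switching} to the offending subtrees $T_{u_p^j}$ and $T_{u_r^{j'}}$. Your version simply makes explicit the bookkeeping the paper leaves implicit, namely the disjointness of the swapped branches and the fact that Lemma~\ref{switching}, applied to the swapped tree $T^*$, returns $T$ with strictly smaller Sombor index.
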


\begin{proof}
Suppose there are vertices $v_p$ and $v_r$ at the same level in $T$ such that the inequality \eqref{Eq:vhvt} does not hold. %do not satisfy the  inequalities.
We can use the operation in Lemma \ref{switching} to increase the Sombor index, by swapping $T_{u_p^j}$ with $T_{u_{r}^{j'}}$ for some $j$ and $j'$.
\end{proof}
Lemma \ref{reverse} does not impose any condition for the case where $\deg(v_p)=\deg(v_r)$. Exchanging branches between the two vertices would not affect the Sombor index.
Depending on $D$ it is possible to have non-isomorphic elements of $\mathbb{T}_D$ that both satisfy the properties described in Lemma \ref{reverse}. We prove that they are all maximal. We first provide one extremal tree.

\begin{lem}
\label{Lem:MDLem8}
For any reduced degree sequence $D$, the graph $\mathcal{M}(D)$ satisfies the properties described in Lemma \ref{reverse}.
\end{lem}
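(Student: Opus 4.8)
The plan is to eliminate the quantification over all rootings in Lemma~\ref{reverse} by reducing its conclusion to a single root-free condition on pairs of vertices, and then to verify that condition from the construction of $\Mi(D)$.

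For the reduction, note that any two distinct vertices $u,v$ can be placed on a common level: writing the path between them as $u=x_0,x_1,\dots,x_m=v$, root at the middle vertex $x_{m/2}$ if $m$ is even and at the middle edge $x_{(m-1)/2}x_{(m+1)/2}$ if $m$ is odd. Conversely, whenever $u$ and $v$ are equidistant from a (vertex or edge) root $r$, the point at which the paths from $r$ to $u$ and to $v$ separate must be the midpoint of the $u$--$v$ path; consequently the parent of $u$ is forced to be $x_1$ and the parent of $v$ to be $x_{m-1}$, no matter how $r$ is chosen. Hence the children of $u$ are exactly its neighbours other than $x_1$, and those of $v$ its neighbours other than $x_{m-1}$, so the whole family of inequalities \eqref{Eq:vhvt} is equivalent to the single statement that
\begin{equation}\label{eq:C}
\max_{\substack{w\sim u\\ w\neq x_1}}\de(w)\ \leq\ \min_{\substack{w'\sim v\\ w'\neq x_{m-1}}}\de(w')
\qquad\text{whenever } \de(u)>\de(v),
\end{equation}
ranging over all ordered pairs $(u,v)$, where $\sim$ denotes adjacency (an empty minimum being $+\infty$). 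It therefore suffices to prove that $\Mi(D)$ satisfies \eqref{eq:C}.

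To check \eqref{eq:C} I would read off the structure of $\Mi(D)$ from its recursive definition. Every non-leaf vertex is either a \emph{hub} -- the root of some pseudo-leaf $[d_i]$, adjacent to $d_i-1$ leaves and to a single further vertex -- or a low-degree \emph{connector} (a degree-$d_t$ merge root introduced at some stage, or the central vertex). The two facts to be established, by induction on the recursion, are: (i) a connector has degree at most that of every leaf-adjacent vertex, so that only connectors can have two or more non-leaf neighbours; and (ii) the degrees are arranged in the alternating-greedy pattern, so that each connector lies in a ``valley'' flanked by hubs of controlled degree. Granting these, \eqref{eq:C} splits into two cases. If some off-path neighbour of $v$ is a leaf, then $\de(v)$ is at most a leaf-adjacent degree, so by (i) $u$ has at most one non-leaf neighbour, which must be the on-path vertex $x_1$; the left-hand side of \eqref{eq:C} is then $1$ and the inequality is clear. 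If instead every off-path neighbour of $v$ is a non-leaf, then $v$ is a connector and the valley property (ii) is exactly what guarantees that the minimum hub-degree on the right dominates the maximum off-path degree on the left.

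The main obstacle is the structural bookkeeping in the inductive step, where $\Mi(d_1,\dots,d_t)$ is formed from $\Mi(d_{d_t},\dots,d_{t-1})$ by merging the root of $R_{d_t}$ with a leaf adjacent to $v_s$. This merge replaces a leaf-neighbour of $v_s$ by a connector of degree $d_t$, so it alters the neighbourhood of $v_s$ and can affect \eqref{eq:C} for old pairs whose path runs through $v_s$; one must check that choosing $v_s$ to be the smallest-degree leaf-adjacent vertex, and giving the new connector degree exactly $d_t$, preserves invariants (i) and (ii). The genuinely new pairs with an endpoint inside $R_{d_t}$ must also be verified, and the second case above -- where both off-path degree sets consist of non-leaves -- is the one requiring the finest degree comparisons. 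Confirming that all of these interface comparisons obey \eqref{eq:C}, rather than establishing the reduction itself, is where the real effort lies.
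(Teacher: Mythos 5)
Your reduction of the root-quantified condition of Lemma \ref{reverse} to a root-free condition on ordered pairs of vertices is correct and is a genuinely nice touch: if $u$ and $v$ are equidistant from any vertex or edge root, the branching point of the two root-paths is forced to be the midpoint of the $u$--$v$ path, so the parents of $u$ and $v$ are their neighbours along that path, and conversely rooting at the midpoint places any pair at a common level. This makes precise something the paper leaves implicit. The problem is everything after that. Your structural dichotomy --- every non-leaf vertex of $\Mi(D)$ is a \emph{hub} (root of some $[d_i]$, adjacent to $d_i-1$ leaves and to a single further vertex) or a \emph{connector} --- is false. Each recursive step merges the root of $R_{d_t}$ with a leaf adjacent to $v_s$, and this turns $v_s$ into a vertex with two or more non-leaf neighbours and fewer than $\de(v_s)-1$ leaf neighbours, so it is neither a hub nor a connector in your sense. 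Concretely, in $\Mi(5,4,4,4,3,3,3,2)$ (Figure \ref{cha5.fig3}) the vertex $v_2$ of the final tree has degree $3$ and neighbours $v_1$ (degree $3$), one leaf, and the unlabelled degree-$2$ merge vertex; this contradicts your claim (i) that only connectors can have two or more non-leaf neighbours, and it invalidates your first case, whose conclusion that the left-hand maximum equals $1$ rests on that dichotomy.

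More importantly, the pairs created by such ``spoiled'' vertices are exactly where the lemma has content: taking $u=v_2$ and $v$ the adjacent degree-$2$ merge vertex, the left-hand side of your condition is $\de(v_1)=3$, and one needs $3\leq\de(v_6)=5$, a comparison that only the fine structure of the construction can deliver. For all such pairs, and for the case where both endpoints have only non-leaf off-path neighbours, you appeal to an invariant (ii) (``each connector lies in a valley flanked by hubs of controlled degree'') that is never stated in a checkable form, let alone proved to be preserved by the recursion. Making it precise and proving its preservation requires using that $s$ is chosen minimal --- i.e.\ that $v_s$ has the smallest degree among the leaf-adjacent vertices of $\Mi(d_{d_t},\dots,d_{t-1})$ --- and then analysing the cases where one of the two compared vertices is $v_s$, the new merge vertex, or lies inside $R_{d_t}$; this is precisely the inductive case analysis that constitutes the paper's proof, and precisely what you defer as ``where the real effort lies.'' Since the acknowledged hard part coincides with the entire content of the lemma, what you have is a correct reformulation of the statement together with a case split that fails on actual instances of $\Mi(D)$, not a proof.
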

\begin{proof}
Let $D=(d_1,\dots,d_t)$ be a reduced degree sequence. 
The case of $t\leq d_t+1$ is trivial: $\mathcal{M}(D)$ has the smallest internal degree in the center and any other vertex is either a  pseudo-leaf or leaf. 

Suppose that it holds for all $t\leq \ell$ for some $\ell\geq d_t+1$. And now consider the case of $t=\ell+1$. By the induction assumption, $\mathcal{M}(d_{d_t},\dots,d_{t-1})$ satisfies the property. By Definition \ref{Def:Algf}, $\mathcal{M}(D)$ is obtained from $\mathcal{M}'=\mathcal{M}(d_{d_t},\dots,d_{t-1})$ by merging the root of $R_{d_t}=[d_1,\dots,d_{d_t-1}]$ to a leaf $z$ attached to a vertex $v_s$.
For the rest of the proof, we use the notations from Lemma \ref{reverse} for $\mathcal{M}(D)$. 
The condition is satisfied trivially if $v_r$ is a leaf. So, we assume that $v_r$ is not a leaf. 
If $v_p$ is a pseudo-leaf, then ${\displaystyle \max_{j} \deg(u_s^j)=1}$ and the condition holds again. So, we can also assume that $v_p$ is not a pseudo leaf. 

The vertex $v_p$ cannot be $z$, as $z$ has the smallest internal degree. If $v_r$ is $z$, then, the property is satisfied again, as the children of $z$ has the largest available degrees. Hence, the case involving $z$ is sorted. From now, we assume that $v_p$ and $v_r$ are in $\mathcal{M}'$. 

If $v_s\notin \{v_p,v_r\}$, then the claim holds by induction assumption. 

The minimum degree of the neighbors of $v_s$ does not decrease from $\mathcal{M}'$ to $\mathcal{M}_D$. Hence, if $v_r=v_s$, the claim also holds by the induction assumption.

Lastly, consider the case where $v_p=v_s$. Since the neighbor $z$ of $v_s$ has the smallest internal degree, ${\displaystyle \max_j \deg(u_p^j)}$ only increase (from $1$ to $d_t$) as we pass from $\mathcal{M}'$ to $\mathcal{M}(D)$ if $v_p$ was a pseudo-leaf in $\mathcal{M}'$. If $v_r$ has no leaf child, then the condition is still satisfied. If $v_r$ has a leaf child, then the choice of $s$ in Definition \ref{Def:Algf} is contradicted, because the fact that $s$ is the smallest index such that $v_s$ has a child leaf in $\mathcal{M}'$ also means that $v_s$ has the smallest degree among all vertices adjacent to a leaf.

\end{proof}
%%%%%%%%%

We now give a full characterisation of all maximal trees. It will help to understand the proof of Theorem \ref{alter}, to know that $\Mi(d_2,\dots,d_{t-1})$ can be obtained from $\Mi(d_1,\dots,d_{t})$ by chopping out pseudo-leaf of degree $d_1$ attached at a vertex of degree $d_t$.

\begin{thm}\label{alter}
Let $T \in \mathbb{T}_D$. $T$ is maximal if and only if it is isomorphic to $\Mi(D)$, with a root preserving isomorphism, up to iterative exchange of branches between vertices of the same degree.

%Then, $\SO(T) \leq \SO(\Mi(D))$, where $\Mi(D)$ is the alternating tree corresponding to the sequence $D$.
\end{thm}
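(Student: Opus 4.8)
The plan is to isolate a single structural claim and to read off both implications of the theorem from it, treating the earlier lemmas as black boxes. The claim is the following: \emph{if $T\in\mathbb{T}_D$ satisfies, for every vertex or edge root, the conclusion of Lemma \ref{reverse}, then there is a choice of root for which $T$ is isomorphic to $\Mi(D)$ by a root-preserving isomorphism, up to iterative exchange of branches between vertices of the same degree.} Everything else is formal consequence.

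Granting the claim, the theorem follows quickly. Since $\mathbb{T}_D$ is finite, a maximal tree $T^{\ast}$ exists; by Lemma \ref{reverse} it satisfies the required properties, and by Lemma \ref{Lem:MDLem8} so does $\Mi(D)$. The claim then gives $T^{\ast}\cong\Mi(D)$ up to same-degree exchanges, and since each such exchange leaves the Sombor index unchanged by Remark \ref{same}, the value $\SO(\Mi(D))=\SO(T^{\ast})$ is the maximum; in particular $\Mi(D)$ is itself maximal. The ``if'' direction is now immediate: any $T$ obtained from $\Mi(D)$ by same-degree exchanges satisfies $\SO(T)=\SO(\Mi(D))$ and is therefore maximal. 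For the ``only if'' direction, a maximal $T$ satisfies Lemma \ref{reverse}, so the claim yields $T\cong\Mi(D)$ up to such exchanges. Thus the whole theorem reduces to the structural claim, which is where essentially all the work lies.

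To prove the claim I would induct on the length $t$ of the reduced degree sequence $(d_1,\dots,d_t)$, taken non-increasing, mirroring the recursive construction of $\Mi(D)$ and the observation recorded just before the statement, that $\Mi(d_2,\dots,d_{t-1})$ is obtained from $\Mi(d_1,\dots,d_t)$ by chopping a degree-$d_1$ pseudo-leaf hanging at a degree-$d_t$ vertex. In the base case $t\leq d_t+1$ the reverse property forces $T$ to be the generalised star underlying $\Mi(D)$: a centre of degree $d_t$ whose non-leaf neighbours are all pseudo-leaves, the only freedom being the order in which equal-degree pseudo-leaves are attached, which is precisely a same-degree exchange. For the inductive step $t\geq d_t+2$, I would use Lemma \ref{reverse} to show that the outermost layer of $T$ must reproduce the recursive step of the construction, namely that $T$ carries a pseudo-leaf whose root has the maximal degree $d_1$ attached at a vertex of the minimal internal degree $d_t$. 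Deleting this configuration yields a tree $T'$ with reduced degree sequence $(d_2,\dots,d_{t-1})$; one checks that $T'$ still satisfies the reverse property, applies the induction hypothesis to obtain $T'\cong\Mi(d_2,\dots,d_{t-1})$ up to same-degree exchanges, and reattaches the deleted pseudo-leaf at the site prescribed by the construction.

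The main obstacle is the inductive step, and two points there need care. First, one must genuinely \emph{force} the outermost layer, i.e. prove that a tree with the reverse property must contain a degree-$d_1$ pseudo-leaf attached at a degree-$d_t$ vertex, ruling out every competing configuration; this is exactly where Lemma \ref{reverse}, together with the minimality built into the choice of the vertex $v_s$ adjacent to a leaf in the construction of $\Mi(D)$, must be invoked, along the lines of the argument already used in Lemma \ref{Lem:MDLem8}. Second, one must verify that the reattachment site is determined only up to a same-degree exchange, so that the ambiguity accumulated across all levels is \emph{exactly} iterated exchange of equal-degree branches and nothing more. Reconciling the root chosen for $T$ with the root of $\Mi(D)$ throughout the peeling is the remaining bookkeeping, made manageable by the fact that removing the outermost pseudo-leaf leaves the root of $\Mi(D)$ unchanged, so the root can be carried along unambiguously through the induction.
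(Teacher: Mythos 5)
Your proposal follows essentially the same route as the paper: the paper likewise reduces the theorem to showing that every tree satisfying Lemma \ref{reverse} has the same Sombor index and is isomorphic to $\Mi(D)$ up to exchanges of branches between equal-degree vertices, and proves this by induction on $t$, peeling off a degree-$d_1$ pseudo-leaf attached at a vertex of degree $d_t$ and then resolving the reattachment ambiguity (via comparison of parent degrees) exactly as the two ``points needing care'' you identify. One small correction: deleting that pseudo-leaf leaves a tree with degree sequence $(d_2,\dots,d_{t-1},d_t-1,1,\dots,1)$ (the paper's $D^-$), so its reduced degree sequence is $(d_2,\dots,d_{t-1})$ only when $d_t=2$; for $d_t>2$ the entry $d_t-1$ survives and the induction must be run on $(d_2,\dots,d_{t-1},d_t-1)$.
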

\begin{proof}
What we are proving is that all element of $\mathbb{T}_D$ that satisfies the properties in Lemma \ref{reverse} have the same Sombor index, which is thus the maximum possible Sombor index in $\mathbb{T}_D$.

We know by Lemma \ref{Lem:MDLem8} that $\Mi(D)$ satisfies Lemma \ref{reverse}.

Let the degree sequence be $D=(d_1,\dots,d_t,1,\dots,1)$ for some $d_t>1$. We use an induction on $t$. The case of $t=1$ is trivial: $\mathbb{T}_D=\{\mathcal{M}(D)\}$ in this case. Suppose that the claim holds for $t=\ell$. Now, let $t=\ell+1$, and $T$ a maximal tree in $\mathbb{T}_D$. By Lemma \ref{reverse}, $T$ has a pseudo leaf $v$ of degree $d_1$ attached to a vertex $u$ of degree $d_t$. Let $T^-$ be the tree obtained from $T$ by removing $v$ and all leaves attached to it. Then, $T^-$ still satisfies Lemma \ref{reverse}, and have the degree sequence $D^-=(d_2,\dots,d_t-1,1\dots,1)$. By the induction assumption, it is isomorphic to $\Mi(D^-)$, up to permutation of branches of vertices of the same degree. Suppose that after $m$ swaps of branches between vertices of the same degree in $T^-$, we obtain $T^-_1$ such that $f:T^-_1\longrightarrow \mathcal{M}(D^-)$ is an isomorphism. Then, $f(u)$ is either a leaf (if $d_t=2$) or the only vertex of $\mathcal{M}(D^-)$ with degree $d_t-1$ (if $d_t>2$).

Let $G$ be the graph obtained from $\mathcal{M}(D^-)$ by adding an edge to join the root of $[d_1]$ to $f(u)$. If $G$ is isomorphic to $\mathcal{M}(D)$, then $f$ extends easily to an isomorphism between $T$ and $\mathcal{M}(D)$. The only case where $G$ and $\mathcal{M}(D)$ can possibly be not isomorphic is if $d_t-1=1$, and $\mathcal{M}(D)$ is obtained from $\mathcal{M}(D^-)$ by adding an edge to join the root of $[d_1]$ to a vertex $f(\nu)$ that is not $f(u)$. Let $u'$ and $\nu'$ be the parents of $u$ and $\nu$, respectively.

If $u'$ and $\nu'$ have the same degree, then we just swap $T_u$ and $T_{\nu}$ to get a graph isomorphic to $\mathcal{M}(D)$ and still have the same Sombor index.

Now suppose that $\deg(u')>\deg(\nu')$. Since $T$ satisfies Lemma \ref{reverse}, the maximum degree of the child of $u'$ has to be at most the minimum of that of $\nu'$, which is $1$. But this contradicts that $u'$ has $u$ as a child of degree $2$ in $T$.

Similarly, if $\deg(u')<\deg(\nu)$, then it contradicts the fact that $\mathcal{M}(D)$ satisfies Lemma \ref{reverse}.
\end{proof}

\begin{rem}
In view of Remark \ref{same}, depending on the degree sequence, the tree that maximises the Sombor index may not be unique as seen in Figure \ref{notunique}.
\end{rem}

\begin{figure}[H]
\begin{tikzpicture}[scale=0.5]
\tikzstyle{every node}=[draw,circle,fill=black,minimum size=5pt,
                            inner sep=0pt]
\node {}[sibling distance=35mm]
    child {  node {}  [sibling distance=20mm]
    					child {node {}
    						child{node{}} 
    						child{node{}}
    					} }
    child {  node{} [sibling distance=20mm]
                    child { node {} }
                    child{node{}
                    }
             }
    child {  node{} [sibling distance=20mm]
                    child { node {}}
                    child{node{}}
             } ;
\end{tikzpicture}
\hspace{2.5cm}
\begin{tikzpicture}[scale=0.5]
\tikzstyle{every node}=[draw,circle,fill=black,minimum size=5pt,
                            inner sep=0pt]
\node {}[sibling distance=35mm]
    child {  node {}  [sibling distance=20mm]
    					child {node {}
    						child{node{}} 
    						child{node{}}
    					} }
    child {  node{}  }
    child {  node{} [sibling distance=20mm]
                    child { node {}
                            child{node{}} 
    						child{node{}}}
                    child{node{}}
             } ;
\end{tikzpicture}
\caption{Non-isomorphic trees that maximise the Sombor index among all trees with given degree sequence $(3,3,3,3,2,1,1,1,1,1,1,1)$.}
\label{notunique}
\end{figure}
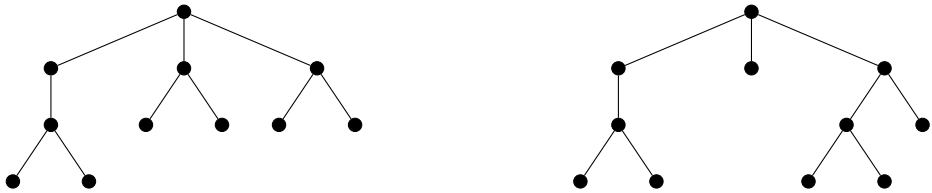

\subsection{Elements of $\mathbb{T}_D$ with minimum Sombor index}

A few recent papers \cite{damnjanovic2022, Wei2022} proved that the greedy tree $G(D)$ has the minimum Sombor index among all elements of $\mathbb{T}_{n,D}$:

\begin{thm}[\cite{damnjanovic2022, Wei2022}]\label{greedy}
Let $T \in \mathbb{T}_D$. Then, $\SO(T) \geq \SO(G(D))$, where $G(D)$ is the greedy tree corresponding to the sequence $D$.
\end{thm}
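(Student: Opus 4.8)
The plan is to mirror the structure of the maximum-Sombor characterisation (Theorem~\ref{alter}) but run the swapping operation in the ``greedy'' direction rather than the ``alternating'' direction. First I would establish the analogue of Lemma~\ref{reverse}: for a tree $T\in\mathbb{T}_D$ that minimises the Sombor index, I claim that for any root choice and any level $i$, whenever two vertices $v_p,v_r$ at level $i$ satisfy $\de(v_p)>\de(v_r)$, then $\min_j \de(u_p^j)\geq \max_j \de(u_r^j)$; that is, larger-degree vertices must carry the larger-degree children. This is the exact reversal of \eqref{Eq:vhvt}, and it follows immediately from Lemma~\ref{switching}: if the inequality failed, there would be a child of the smaller vertex $v_r$ with degree strictly larger than some child of $v_p$, and swapping the corresponding complete branches would \emph{decrease} the Sombor index, contradicting minimality.

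Next I would verify that the greedy tree $G(D)$ satisfies this reversed condition at every level. By Definition~\ref{Def:gf}, the greedy tree is precisely the level greedy tree whose leveled degree sequence is globally non-increasing level by level, $\min(i_{j,1},\dots,i_{j,k_j})\geq \max(i_{j+1,1},\dots,i_{j+1,k_{j+1}})$, and within each level the greedy algorithm of Definition~\ref{Def:lgf} assigns the largest available degrees to the children of the largest-degree parents (reading degrees in non-increasing order). A short check of the labelling in Definition~\ref{Def:lgf}(ii) confirms that the children of a higher-degree vertex all have degree at least as large as the children of any lower-degree vertex at the same level, which is exactly the reversed condition. This is the analogue of Lemma~\ref{Lem:MDLem8}, and it shows $G(D)$ is a candidate minimiser.

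Finally I would close the loop by showing that any $T$ satisfying the reversed condition at every level must be isomorphic to $G(D)$ up to permutation of branches at vertices of equal degree, and hence that all such trees share the common (minimum) Sombor index. I would argue by induction on the number $t$ of non-leaf vertices, exactly as in the proof of Theorem~\ref{alter}: a minimal $T$ has a pseudo-leaf of the appropriate degree attached to a vertex whose degree is forced by the reversed condition; removing it yields a smaller tree still satisfying the condition, to which the induction hypothesis applies; then one checks that the reattachment is determined up to equal-degree swaps. Since the swapping operation of Lemma~\ref{switching} strictly decreases $\SO$ whenever the reversed condition is violated (and leaves it unchanged when degrees coincide, by Remark~\ref{same}), every minimiser satisfies the condition, so $\SO(T)\geq \SO(G(D))$ with equality characterising exactly these trees.

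The main obstacle I anticipate is the bookkeeping in the inductive reattachment step: verifying that the forced attachment vertex in $T$ really matches the one prescribed by the greedy construction, and handling the boundary case where the parent degree drops to a value that makes two attachment sites a priori distinguishable. As in Theorem~\ref{alter}, this is resolved by invoking the reversed condition for both $T$ and $G(D)$ and deriving a contradiction from any mismatch in parent degrees; the delicate point is that the greedy direction forces the \emph{maximum} child degree downward rather than upward, so the inequalities in that contradiction argument run the opposite way and must be re-checked carefully.
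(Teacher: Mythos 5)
Your steps (1) and (2) track the paper's own development: step (1) is exactly Lemma \ref{reverse2}, and step (2), though true, becomes redundant once step (3) is phrased as ``the reversed condition forces isomorphism to $G(D)$ up to equal-degree swaps''. The genuine gap is in step (3). You propose to run the induction ``exactly as in the proof of Theorem \ref{alter}'', i.e.\ by removing a pseudo-leaf whose degree and parent degree are forced by the reversed condition. This reduction is the inverse of the recursive construction of $\Mi(D)$, and it has no analogue for greedy trees: greedy trees are built top-down by levels, not bottom-up by attaching pseudo-leaves, and pseudo-leaf removal does not in general preserve the reversed condition. Concretely, take $D=(3,3,2,2,1,1,1,1)$, so $G(D)$ is a vertex $v_1$ of degree $3$ adjacent to $v_2$ of degree $3$ (carrying two leaves) and to $v_3,v_4$ of degree $2$ (each carrying one leaf). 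Removing the pseudo-leaf $v_3$ of degree $d_t=2$ with its leaf leaves $v_1$ with degree $2$ and internal child $v_4$ of degree $2$, while its neighbour $v_2$ of degree $3$ has only leaf children; rooting at the edge $v_1v_2$ then violates the reversed condition ($1\geq 2$ fails), and indeed the resulting tree is not greedy. Here the only removal that works is the pseudo-leaf of degree $d_1=3$. By contrast, for $D=(3,3,3,3,2,1,\dots,1)$ the situation is reversed: removing the degree-$2$ pseudo-leaf yields $G((3,3,3,2,1,\dots,1))$, whereas removing a degree-$3$ pseudo-leaf attached to the root breaks the condition. So there is no rule, expressible in terms of the degrees alone, selecting which pseudo-leaf to remove; in particular the reduced sequence $D^-$ is not determined by $D$, and the induction cannot steer different condition-satisfying trees to a common target $G(D^-)$.

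What the paper does instead is a different reduction, and this is the idea your plan is missing. By Remark \ref{Rem:SDegPL}, in any condition-satisfying tree the vertex $u$ of degree $d_1$ is adjacent to vertices of degrees $d_2,\dots,d_{d_1+1}$, and after equal-degree swaps its neighbour $v_2$ of degree $d_2$ has remaining neighbours of degrees $d_{d_1+2},\dots,d_{d_1+d_2}$. One then \emph{merges} $u$ and $v_2$ into a single vertex $w$ of degree $d_1+d_2-2$, obtaining a tree $T'$ whose degree sequence $D'=(d_1+d_2-2,d_3,\dots,d_t,1,\dots,1)$ \emph{is} determined by $D$. Minimality of $T'$ is established by a splitting argument: a hypothetically better tree $T''$ for $D'$ could be split back into two adjacent vertices of degrees $d_1$ and $d_2$, and the identity $\SO(T)-\SO(T')=\SO(T''')-\SO(T'')$ would contradict the minimality of $T$. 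The induction hypothesis applied to $D'$ is then transported back through the splitting to conclude $T\cong G(D)$ up to equal-degree swaps, whence $\SO(T)\geq\SO(G(D))$ for all $T\in\mathbb{T}_D$. This top-down merge/split recursion is the correct inverse of the greedy construction; the bottom-up pseudo-leaf recursion you borrowed from Theorem \ref{alter} is the one step of your plan that fails and cannot be repaired as stated.
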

Since swapping branches between vertices of the same degree does not affect the Sombor index, there are cases where $G(D)$ is not the only tree that has the minimum Sombor index, that is for example if $D$ has repeated value other than $1$. We now prove that trees obtained from $G(D)$ by iteratively swapping branches between vertices of the same degree are the only ones that achieve the minimum Sombor index. Before proving the main theorem, 
we need one more lemma.

\begin{lem} \label{reverse2}
Let $T \in \mathbb{T}_D$. We consider $T$ to be rooted at an arbitrary vertex or an edge. Let $v_1,\dots,v_k$ be vertices at level $i$ and $u_m^1,\dots,u_m^{\deg(v_m)-1}$ the respective children of $v_m$ for $m\in\{1,\dots,k\}$. 
$T$ has the minimum Sombor index if it satisfies the following: If, for some $i,j\in\{1,\dots,k\}$, we have $ \de(v_i) > \de(v_j)$, then 
$$
\min_{j} \de({u_{i}}^j) \geq \max_{j}\de({u_{j}}^j).
$$
\end{lem}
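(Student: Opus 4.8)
The plan is to treat this as the exact mirror of the maximum case developed in Lemma \ref{reverse}, Lemma \ref{Lem:MDLem8} and Theorem \ref{alter}, with every inequality reversed. First I would fix the orientation of Lemma \ref{switching}: there the configuration $T$ has the \emph{smaller}-degree parent $u$ carrying the \emph{larger}-degree child $z$, while the larger-degree parent $v$ carries the smaller-degree child $w$, and the swap to the sorted configuration yields $\SO(T')<\SO(T)$. Thus a swap toward the greedy arrangement (large-degree child under large-degree parent) strictly decreases the Sombor index. Exactly as in the proof of Lemma \ref{reverse}, this gives the converse direction for free: any $T\in\mathbb{T}_D$ of minimum Sombor index must satisfy the greedy inequality of the lemma, for otherwise a violating pair $v_i,v_j$ at some level would supply branches $T_{u_i^j}$ and $T_{u_j^{j'}}$ whose swap strictly lowers $\SO(T)$, contradicting minimality.

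The real content is \emph{sufficiency}, and for this I would prove the key claim that all trees in $\mathbb{T}_D$ satisfying the greedy property share a common Sombor index. Combined with the previous paragraph (which shows at least one minimiser satisfies the property) and Theorem \ref{greedy} (which identifies $\SO(G(D))$ as the minimum), this common value is forced to be the minimum, so every property-satisfying $T$ attains it. To establish the key claim I would root $T$ at a vertex of maximum degree and argue, by induction on the length $t$ of the reduced degree sequence $(d_1,\dots,d_t)$, that $T$ is isomorphic to $G(D)$ up to iterated swaps of branches between vertices of equal degree. The greedy inequality, read level by level from the root, forces the largest available child-degrees to sit under the largest-degree parents; since this assignment is deterministic on degree \emph{multisets} and agrees with Definition \ref{Def:gf}, the level degree sequences of $T$ must coincide with those of $G(D)$, and the isomorphism is built one level at a time, the only freedom being permutations of equal-degree branches, which is precisely the claimed equivalence. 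For the inductive reduction I would strip off a pseudo-leaf together with the leaves hanging from it — the mirror of the pseudo-leaf removed in the proof of Theorem \ref{alter} — to obtain a tree $T^-$ with a shorter reduced degree sequence that still satisfies the greedy property; the induction hypothesis identifies $T^-$ with the corresponding greedy tree up to equal-degree swaps, and I then extend the isomorphism by reattaching the branch.

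By Remark \ref{same}, swaps of branches between vertices of equal degree leave $\SO$ unchanged, so isomorphism-up-to-swaps immediately yields $\SO(T)=\SO(G(D))$, which completes the argument once the reduction is in place.

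The main obstacle I expect is the re-attachment step in the induction, the analog of the closing case analysis in Theorem \ref{alter}: after removing the pseudo-leaf and identifying $T^-$ with the greedy tree on $D^-$, one must rule out that the branch could legitimately reattach at a vertex other than the one prescribed by the greedy construction. As there, I anticipate handling this by a short case split on the degrees of the two candidate parents, using the greedy inequality for $T$ together with the fact that $G(D)$ itself satisfies it, to derive a contradiction unless the two candidates have equal degree — in which case the two attachments differ by an equal-degree branch swap and hence give the same Sombor index. Extra care is needed precisely where the relevant degrees coincide, since that is exactly where non-isomorphic minimisers arise, all of them linked by such swaps.
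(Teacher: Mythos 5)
Your first step --- that any minimiser must satisfy the stated inequality, since a violating pair at some level admits a branch swap as in Lemma \ref{switching} that strictly decreases $\SO$ --- is exactly the paper's own (terse) proof of Lemma \ref{reverse2}. Your reading of the logical situation is also the paper's: the swap argument alone only gives necessity, and the genuine sufficiency content is the claim that \emph{all} trees satisfying the property have the same Sombor index; in the paper that claim is precisely what the proof of Theorem \ref{alter2} establishes. So the issue is not your architecture but the way you propose to prove that key claim.

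Your inductive reduction --- strip off a pseudo-leaf with its leaves, ``the mirror of the pseudo-leaf removed in the proof of Theorem \ref{alter}'' --- fails, because the minimum case is not a mirror of the maximum case in this respect: the family of trees satisfying the greedy property is not closed under pseudo-leaf removal. Concretely, take $D=(3,3,3,2,1,1,1,1,1)$. Then $G(D)$ has root $r$ of degree $3$ with children $a,b,c$ of degrees $3,3,2$, where $a,b$ carry two leaves each and $c$ one leaf; its pseudo-leaves are $a,b,c$. Removing $c$ and its leaf yields a tree $T^-$ in which $r$ has degree $2$ and is adjacent to $a$ and $b$, both of degree $3$. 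Rooting $T^-$ at the edge $ra$ puts $r$ and $a$ on the same level with $\de(a)>\de(r)$, yet $a$'s children are leaves while $r$'s child is $b$ of degree $3$, so the property is violated; and indeed $\SO(T^-)=2\sqrt{13}+4\sqrt{10}\approx 19.86$ while the greedy tree with the same degree sequence $(3,3,2,1,1,1,1)$ has Sombor index $\sqrt{18}+\sqrt{13}+3\sqrt{10}+\sqrt{5}\approx 19.57$. (Stripping $a$ or $b$ instead fails the same way.) So $T^-$ neither satisfies the property nor is minimal, and your induction hypothesis cannot be applied to it; since $G(D)$ itself is a legitimate instance of a property-satisfying tree, the reduction is broken, not merely incomplete at the reattachment step you flagged. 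The structural reason is that $\Mi(D)$ is built recursively by attaching pseudo-leaves (large degrees at the bottom, hanging from small degrees), whereas $G(D)$ clusters its large degrees together near the root; the correct recursion, which the paper uses in Theorem \ref{alter2}, goes through the top instead of the bottom: by Remark \ref{Rem:SDegPL} a property-satisfying tree has adjacent vertices $u,v_2$ of degrees $d_1,d_2$; merge them into one vertex of degree $d_1+d_2-2$, show by a cut-and-paste argument that the merged tree is minimal for $(d_1+d_2-2,d_3,\dots,d_t,1,\dots,1)$, apply induction to identify it with the corresponding greedy tree up to equal-degree swaps, and then split the merged vertex back into $u$ and $v_2$.
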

\begin{proof}
The proof is similar to that of Lemma \ref{reverse}. Whenever the condition is not met we apply  the transformation in Lemma \ref{switching}, exchanging branches from $v_i$ and $v_j$, from $T$ to obtain a tree with same degree sequence but smaller Sombor index. 
\end{proof}

\begin{rem}
\label{Rem:SDegPL}
Note that a tree that satisfies Lemma \ref{reverse2} must have a vertex $u$ of degree $d_1$ with neighbors $v_2,\dots,v_{d_1+1}$ with $\deg_T(v_i)=d_i$ for all $i$. Furthermore, by swapping branches between vertices of degree $d_2$ if there are many (and not changing the Sombor index), $v_2$ can be chosen to have neighbours $v_{d_1+2},\dots,v_{d_1+d_2}$, again with $\deg_T(v_i)=d_i$.
\end{rem}

\begin{thm}\label{alter2}
Let $T \in \mathbb{T}_D$. $T$ is minimal if and only if it is isomorphic to $G(D)$ up to a  finite number of iterative exchanges of branches between vertices of the same degree.

%Then, $\SO(T) \leq \SO(\Mi(D))$, where $\Mi(D)$ is the alternating tree corresponding to the sequence $D$.
\end{thm}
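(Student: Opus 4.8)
The plan is to mirror the proof of Theorem~\ref{alter}, replacing the alternating greedy tree $\Mi(D)$ by the greedy tree $G(D)$, Lemma~\ref{reverse} by Lemma~\ref{reverse2}, and using Remark~\ref{Rem:SDegPL} in the role that the location of the extremal pseudo-leaf played before. For the \emph{if} direction, suppose $T$ is obtained from $G(D)$ by finitely many exchanges of branches between vertices of the same degree. By Remark~\ref{same} each such exchange leaves the Sombor index unchanged, so $\SO(T)=\SO(G(D))$, which by Theorem~\ref{greedy} is the minimum value over $\mathbb{T}_D$; hence $T$ is minimal.

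For the \emph{only if} direction, write $D=(d_1,\dots,d_t,1,\dots,1)$ with $d_1\geq\cdots\geq d_t\geq 2$. First I would observe that any minimal $T$ must satisfy the hypothesis of Lemma~\ref{reverse2}: were it to fail for some pair of same-level vertices, the transformation of Lemma~\ref{switching} would yield a tree in $\mathbb{T}_D$ of strictly smaller Sombor index, contradicting minimality. It therefore suffices to prove, by strong induction on $t$, that every $T\in\mathbb{T}_D$ satisfying Lemma~\ref{reverse2} is isomorphic to $G(D)$ up to same-degree branch exchanges; by the \emph{if} direction this simultaneously shows that all such trees attain the minimum. The base case $t=1$ is immediate, since then $\mathbb{T}_D=\{G(D)\}$ is a single star.

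For the inductive step I would use Remark~\ref{Rem:SDegPL} to expose the greedy structure near the top (a vertex $u$ of degree $d_1$ whose neighbours realise the next-largest degrees, and so on), then locate a pseudo-leaf $v$ of the minimum internal degree $d_t$, delete $v$ together with its $d_t-1$ leaf-children, and obtain a tree $T^-$ with reduced degree sequence $D^-$ having strictly fewer non-leaf vertices. Since deleting such a pseudo-leaf cannot create a violation of the inequalities in Lemma~\ref{reverse2}, $T^-$ still satisfies that lemma, so by the induction hypothesis $T^-\cong G(D^-)$ up to same-degree exchanges; after performing those exchanges, fix an isomorphism $f$. Reattaching $v$ at the image of its former parent should recover $G(D)$ and extend $f$ to the desired isomorphism $T\cong G(D)$.

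The main obstacle is the one that also appears at the end of Theorem~\ref{alter}: the canonical attachment point of the degree-$d_t$ pseudo-leaf in $G(D)$ need not coincide with $f(p)$, where $p$ is the parent of $v$ in $T$. When $f(p)$ and the canonical vertex $q$ have equal degree, exchanging the branches hanging at these two vertices is legitimate by Remark~\ref{same} and repairs the discrepancy without changing the Sombor index. When their degrees differ I would rule out the mismatch by comparing the degrees of $p$, $q$ and their parents against the inequalities forced by Lemma~\ref{reverse2} (respectively, forced by the fact that $G(D)$ itself satisfies Lemma~\ref{reverse2}), exactly as the degrees of $u'$ and $\nu'$ are played off against one another in Theorem~\ref{alter}. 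The remaining technical point, and where care is needed, is to confirm that a degree-$d_t$ pseudo-leaf always exists in a tree satisfying Lemma~\ref{reverse2} and that its removal preserves those hypotheses; this is precisely where the non-increasing-along-root-to-leaf-paths behaviour guaranteed by the greedy condition of Lemma~\ref{reverse2} must be exploited.
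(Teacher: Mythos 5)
Your ``if'' direction and your observation that every minimal tree must satisfy Lemma~\ref{reverse2} are both fine, and the existence of a degree-$d_t$ pseudo-leaf that you flag as delicate can in fact be proved; but that is not the real obstruction. The fatal step is the claim that deleting a degree-$d_t$ pseudo-leaf (with its leaf children) ``cannot create a violation of the inequalities in Lemma~\ref{reverse2}.'' This is false: the deletion decrements the degree of the parent $p$ by one, and that can invert exactly the degree comparisons the greedy structure rests on. Concrete counterexample: take reduced degree sequence $(3,3,2,2)$, so $G(D)$ has a root $v_1$ of degree $3$ with children of degrees $3,2,2$, the degree-$3$ child carrying two leaves and each degree-$2$ child one leaf. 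Every degree-$2$ pseudo-leaf of $G(D)$ is a child of $v_1$, and removing one of them (with its leaf) turns $v_1$ into a degree-$2$ vertex, producing the caterpillar with reduced sequence $(3,2,2)$ in which the two degree-$2$ vertices form a path hanging off the degree-$3$ vertex. That caterpillar violates Lemma~\ref{reverse2} (root it at the edge joining the degree-$3$ vertex to its degree-$2$ neighbour: the condition would force the children of the degree-$3$ vertex, which are leaves, to have degree at least $2$) and its Sombor index is strictly larger than that of $G(3,2,2)$. So $T^-$ need not satisfy Lemma~\ref{reverse2} and need not be isomorphic to $G(D^-)$ \emph{even when $T=G(D)$ itself}; the induction hypothesis cannot be invoked, and ``reattaching $v$'' cannot recover $G(D)$. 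The mirroring of Theorem~\ref{alter} breaks because of an asymmetry between the two extremal structures: in $\Mi(D)$ the removed pseudo-leaf has the maximal degree $d_1$ and hangs from the vertex of \emph{minimal} internal degree $d_t$, so the removal decrements the minimum, which stays the minimum; in a greedy tree the parent of a degree-$d_t$ pseudo-leaf has some intermediate degree, and decrementing it scrambles the level-monotone degree pattern.

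This is why the paper's proof does not touch the bottom of the tree at all, but reduces $t$ at the top. Using Remark~\ref{Rem:SDegPL}, it merges the vertex $u$ of degree $d_1$ with its neighbour $v_2$ of degree $d_2$ into one vertex $w$, obtaining a tree $T'$ with degree sequence $D'=(d_1+d_2-2,d_3,\dots,d_t,1,\dots,1)$; since $d_1+d_2-2\geq d_1$, the merged vertex is the maximum entry of $D'$ and the greedy structure is undisturbed. It then proves that $T'$ is minimal in $\mathbb{T}_{D'}$ by a cut-and-paste contradiction (if some $T''$ beat $T'$, then splitting the merged vertex of $T''$ back into degrees $d_1$ and $d_2$ would produce a tree beating $T$, because merging and splitting change the Sombor index by equal amounts), applies the induction hypothesis to identify $T'$ with $G(D')$ up to same-degree branch exchanges, and finally splits $w$ back into two vertices to extend the isomorphism to $T\cong G(D)$. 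Any repair of your bottom-up plan would have to re-establish, after deletion, both the Lemma~\ref{reverse2} property and the minimality of the smaller tree; the paper's merge-and-split argument is precisely the device that achieves both at once.
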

\begin{proof}
We prove that all trees of degree sequence $D$ that satisfies Lemma \ref{reverse2} has the same Sombor index, and hence have the minimum Sombor index among all element of $\mathbb{T}_D.$

Let $D=(d_1,\dots,d_t,1,\dots,1)$, with $d_t>1$. We use an induction on $t$. If $t=1$, the claim trivially holds, as there would be only one possible trees. Let us assume that it holds for some $t=j$ and let us consider a case where $t=j+1$. By Remark \ref{Rem:SDegPL}, after a finite number of iterative swapping branches between vertices of the same degree if needed, $T$ has a vertex $u$ of degree $d_1$ with neighbors $v_2,\dots,v_{d_1+1}$, and $v_2$ with neighbors $u,v_{d_1+2,\dots,v_{d_1+d_2}}$, where $\deg_T(v_i)=d_i$ for all $i$. Let $T'$ be the tree obtained from $T$ by merging $u$ and $v_2$ to form $w$.

$T'$ is a minimal tree among all trees of degree sequence $D'=(d_1+d_2-2,d_3,\dots,d_t,1,\dots,1)$. Otherwise, there would be a minimal tree $T''$ of degree sequence $D'$ and such that
\begin{equation}
\label{Eq:Induc}
\SO(T'')<\SO(T').
\end{equation} 
By Remark \ref{Rem:SDegPL} again, $T'$ has a vertex $v'_2$ with neighbors $v'_3,\dots,v'_{d_1+d_2+1}$ with $\deg_{T'}(v'_2)=d_1+d_2-2$ and $\deg_{T'}(v'_j)=d_j$ for all $j>2$. Let $T'''$ be obtained from $T''$ by breaking $v'_2$ into two adjacent vertices $v'_{1,1}$ and $v'_{1,2}$ such that $v'_{1,1}$ has degree $d_1$ and neighbors $v'_3,v'_4,\dots,v'_{d_1+1}$ in addition to $v'_2$, while $v'_{1,2}$ has degree $d_2$ and neighbors $v'_{d_1+2,\dots,d_1+d_2}$. By construction 
$$
\SO(T)-\SO(T')=\SO(T''')-\SO(T'').
$$
Combined with Equation \eqref{Eq:Induc}, this gives $\SO(T''')<\SO(T)$, which contradicts the assumption that $T$ is minimal.

Hence, there is a tree $K'$ obtained from $T'$ by iterative swapping of branches between vertices of the same degree such that there is an isomorphism
$f:V(K')\to V(G(D'))$. Necessarily, $f(u')$ is the only vertex of $G(D')$ with degree $d_1+d_2-2$. Let $K$ be obtained from $K$ by breaking the vertex of degree $d_1+d_2-2$ in $K'$ into two vertices of degree $d_1$ and $d_2$, respectively, in the same way as how $T$ is obtained from $T'$. Then $K$ can be obtained from $T$ after iterative swapping branches between vertices of the same degree.

The map
$$
\begin{array}{rcl}
  g:    V(K)&\to& V(G(D)) \\
     x& \mapsto& g(x)=
     \begin{cases}
     f(x) & \text{ if }x\in V(K')\\
     z_1 &\text{ if } x=u\\
     z_2& \text{ if } x=v_2.
     \end{cases}
\end{array}
$$
where $z_1$ and $z_2$ are vertices of $G(D)$ such that the multiset equality
$$
\{\deg_T(x):x\in N_T(u)\cup N_T(v_2)\}= \{\deg_{G(D)}(x):x\in N_{G(D)}(z_1)\cup N_{G(D)}(z_2)\},
$$
is an isomorphism.
\end{proof}
Lemma \ref{reverse2} also immediately imply the following theorem for rooted trees
\begin{thm}
\label{Thm:rooted}
For any tree $T$ with level degree sequence $D$, we have $\SO(T)\geq \SO(G(D))$.
\end{thm}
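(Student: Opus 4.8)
The plan is to reduce the claim to Lemma \ref{reverse2}, which already characterises minimisers of $\SO$ among trees sharing an \emph{ordinary} degree sequence. First I would pass from the leveled datum to the underlying degree sequence: a level degree sequence $D$ as in \eqref{Eq:D} determines an ordinary degree sequence, say $D_0$, by forgetting the level partition and collecting all the entries into a single non-increasing sequence. Every tree $T$ whose level degree sequence, with respect to its given root, equals $D$ then lies in $\mathbb{T}_{D_0}$; in particular so does the level greedy tree $G(D)$ of Definition \ref{Def:lgf}. Thus it suffices to show that $G(D)$ attains the minimum of $\SO$ over $\mathbb{T}_{D_0}$, and the desired inequality $\SO(T)\geq\SO(G(D))$ follows at once.

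For this I would invoke Lemma \ref{reverse2} with $G(D)$ rooted at its canonical root, so the task becomes checking that $G(D)$ meets the sufficient condition of that lemma at every level: whenever two vertices $v_i,v_j$ at a common level satisfy $\de(v_i)>\de(v_j)$, every child of $v_i$ has degree at least that of every child of $v_j$. This is precisely the content of the greedy labelling in Definition \ref{Def:lgf}. Indeed, each level-sequence $V_h=(i_{h,1},\dots,i_{h,k_h})$ is listed in non-increasing order, so a vertex of strictly larger degree carries a smaller label index; the children are then labelled block by block in increasing index order and assigned the degrees $i_{h+1,1}\geq i_{h+1,2}\geq\cdots$ in that same order. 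Consequently the children of the higher-degree, hence earlier-labelled, parent form an initial block of labels and receive degrees dominating those of any later block, which is exactly the domination inequality demanded by Lemma \ref{reverse2}.

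Once this verification is in place, Lemma \ref{reverse2} immediately yields that $G(D)$ has the minimum Sombor index in $\mathbb{T}_{D_0}$, whence $\SO(T)\geq\SO(G(D))$ for every tree $T$ with level degree sequence $D$, as claimed.

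I expect the only point requiring genuine care, and thus the main obstacle, to be the level-by-level verification in the second step: one must be certain that the greedy algorithm's rule of handing out the largest remaining degrees to the children of the currently largest-degree parent really does produce, at every single level and for every pair of unequal-degree parents, the inequality of Lemma \ref{reverse2}. This is a matter of reading off Definition \ref{Def:lgf} carefully rather than proving any new estimate, so no inequality beyond Lemma \ref{switching}, already embedded in Lemma \ref{reverse2}, is needed.
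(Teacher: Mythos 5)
Your reduction is exactly where the proof breaks: the intermediate statement you reduce to --- that the level greedy tree $G(D)$ minimises $\SO$ over all of $\mathbb{T}_{D_0}$ --- is false in general, so no verification in your second step can rescue it. By Theorems \ref{greedy} and \ref{alter2}, the minimisers in $\mathbb{T}_{D_0}$ are the greedy trees $G(D_0)$ (up to swaps between vertices of equal degree), and a level greedy tree is a greedy tree only when its leveled degree sequence satisfies the across-level condition $\min(i_{j,1},\dots,i_{j,k_j})\geq \max(i_{j+1,1},\dots,i_{j+1,k_{j+1}})$ of Definition \ref{Def:gf}. When $D$ violates that condition your claim fails concretely: take $D=((2),(3,3),(1,1,1,1))$, so $D_0=(3,3,2,1,1,1,1)$. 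Then
$$
\SO(G(D))=2\sqrt{13}+4\sqrt{10}\approx 19.86,
\qquad
\SO(G(D_0))=\sqrt{18}+\sqrt{13}+3\sqrt{10}+\sqrt{5}\approx 19.57,
$$
so $G(D)$ is not minimal in $\mathbb{T}_{D_0}$. The mechanism of the failure is your appeal to Lemma \ref{reverse2}: to conclude minimality in $\mathbb{T}_{D_0}$, the domination condition must hold for \emph{every} choice of root, vertex or edge (exactly as in Lemma \ref{reverse}), not merely for the canonical root of $G(D)$. Re-rooting mixes vertices that originally lay in different levels, and that is precisely where $G(D)$ fails: in the example, rooting at the edge joining the degree-$2$ vertex to a degree-$3$ vertex places these two on the same level, the degree-$3$ vertex has only leaf children while the degree-$2$ vertex has a child of degree $3$, and the swap of Lemma \ref{switching} strictly decreases $\SO$ (producing $G(D_0)$).

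What the theorem actually asserts is minimality only within the class of rooted trees sharing the leveled degree sequence $D$, and the paper's proof never leaves this class: starting from the level furthest from the root and moving upward, whenever two vertices of a common level violate the condition of Lemma \ref{reverse2}, apply the swap of Lemma \ref{switching}; such a swap exchanges complete branches between vertices of the same level, hence preserves the leveled degree sequence, and never increases $\SO$. Iterating transforms an arbitrary $T$ with leveled degree sequence $D$ into $G(D)$, giving $\SO(T)\geq \SO(G(D))$. Your within-level verification that the greedy labelling hands larger degrees to the children of larger-degree parents is correct as far as it goes, but the constraint doing the real work is the fixed level structure, not minimality in $\mathbb{T}_{D_0}$: keeping the levels frozen is what prevents the (in general strictly better) greedy tree $G(D_0)$ from entering the comparison at all.
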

\begin{proof}
Apply Lemma \ref{reverse2} from the level furthest from the root
\end{proof}
\section{Trees of order $n$ with different degree sequences}
We denote by $S_n$ the set of all permutations of $\{1,\dots,n\}$. Let $A=(a_1,\dots,a_n)$ and $B=(b_1,\dots,b_n)$ be sequences of nonnegative numbers. We say that $B$ \textit{majorizes} $A$ if for all $1 \leq k \leq n$ we have
\[\sum_{i=1}^k a_i \leq \sum_{i=1}^k b_i.\]
If for any $\sigma \in S_n$ the sequence $B$  majorizes $(a_{\sigma(1)},\dots,a_{\sigma(n)})$, then we write
\[A \LE B.\]

We compare extremal trees associated to different degree sequences. This allows us to characterize the extremal trees with various degree conditions, relative to the Sombor index. It is well-known that two different degree sequences of trees can be recovered one step at a time as stated in the following lemma.
\begin{lem}[\cite{zhang2013}]
Let $D=(d_0,\dots,d_{n-1})$ and $D'=(d_0',\dots,d'_{n-1})$ be two nonincreasing degree sequences of trees. If $ D \LE D'$, then there exists a series of degree sequences $D_1,\dots,D_k$ such that $D \LE D_1 \LE \cdots \LE D_k \LE D'$, where $D_i$ and $D_{i+1}$ differ at exactly two entries, say $d_j$ (resp. $d'_j$) and $d_k$(resp. $d_k'$) of $D_i$ (resp. $D_{i+1}$), with $d'_j=d_j+1, d_k'=d_k-1$ and $j <k$.\label{lemzhang}
\end{lem}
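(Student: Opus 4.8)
The final statement to prove is Lemma \ref{lemzhang}, a result attributed to \cite{zhang2013}, which asserts that if two nonincreasing tree degree sequences $D$ and $D'$ satisfy $D \LE D'$, then one can interpolate between them by a chain of degree sequences, each step shifting one unit of degree from a later entry to an earlier entry. Since this is a cited combinatorial lemma about majorization of degree sequences (not about the Sombor index itself), the plan is to give a self-contained proof via a discrete continuity / minimal-counterexample argument on the ``transportation distance'' between $D$ and $D'$.

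The plan is as follows. First I would reduce $\LE$ to ordinary majorization: since both $D$ and $D'$ are already written in nonincreasing order, the condition $A \LE B$ for sorted sequences is equivalent to the partial-sum inequalities $\sum_{i=1}^k d_i \leq \sum_{i=1}^k d'_i$ for all $k$, together with equality at $k=n$ (both sequences sum to $2(n-1)$ since they are tree degree sequences). I would record the quantity $\Delta = \tfrac{1}{2}\sum_{i=1}^{n} \lvert d_i - d_i' \rvert$, or equivalently $\sum_i \max(d_i'-d_i,0)$, as an integer measuring the ``distance'' to be closed, and argue by induction on $\Delta$; the base case $\Delta = 0$ gives $D = D'$. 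For the inductive step, assuming $D \neq D'$, I would locate indices where the sequences disagree and perform a single elementary move.

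The key combinatorial step is choosing the right pair $(j,k)$. Because $D$ and $D'$ have equal total sum but $D \neq D'$, there exist positions where $d_i' > d_i$ and positions where $d_i' < d_i$. Let $j$ be the smallest index with $d_j' > d_j$ and let $k$ be the smallest index with $d_k' < d_k$; the partial-sum inequalities $\sum_{i \le m} d_i \le \sum_{i \le m} d_i'$ force $j < k$, since if the first discrepancy were a deficit ($d_k' < d_k$) it would violate a partial sum inequality. I would then define $D_1$ to agree with $D$ everywhere except $d_j \mapsto d_j + 1$ and $d_k \mapsto d_k - 1$, matching the required form in the statement ($d_j' = d_j + 1$, $d_k' = d_k - 1$, $j < k$). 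The main obstacle, and the step deserving the most care, is verifying that $D_1$ is still a legitimate nonincreasing tree degree sequence and that $D \LE D_1 \LE D'$ so the induction applies to $D_1$ in place of $D$. For the nonincreasing and validity check I would confirm that incrementing $d_j$ does not exceed $d_{j-1}$ and decrementing $d_k$ does not drop below $d_{k+1}$ (using that $d_j' > d_j$ with $D'$ nonincreasing controls $d_{j-1}$, and symmetrically for $k$); one must also ensure the entries stay within the range $[1, n-1]$ of a tree degree sequence, which follows because the target values $d_j', d_k'$ are themselves valid tree-degree entries.

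For the majorization relations I would check the partial sums directly: the partial sum of $D_1$ equals that of $D$ for $m < j$ and for $m \geq k$, exceeds it by exactly $1$ for $j \le m < k$, and in that range remains at most the partial sum of $D'$ because $\sum_{i\le m} d_i' \ge \sum_{i \le m} d_i + 1$ there (as the deficit absorbing $d_j' - d_j$ has not yet been reached before index $k$). This simultaneously shows $D \LE D_1$ and $D_1 \LE D'$. Since passing from $D$ to $D_1$ strictly decreases $\Delta$ by one, the induction hypothesis supplies a chain $D_1 \LE D_2 \LE \cdots \LE D'$ of the required form, and prepending the step $D \LE D_1$ completes the chain; the strict monotonicity of $\Delta$ guarantees termination. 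Should the attribution to \cite{zhang2013} be taken as sufficient, I would instead simply cite that reference, but the argument above is the natural self-contained route.
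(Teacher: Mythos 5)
The paper itself never proves Lemma \ref{lemzhang}; it is imported from \cite{zhang2013} and used as a black box, so you are supplying an argument rather than competing with one. Your induction on $\Delta$ is the right skeleton, and your treatment of $j$ (smallest index with $d_j'>d_j$) is sound: every $i<j$ has $d_i'\le d_i$, so $d_{j-1}\ge d_{j-1}'\ge d_j'\ge d_j+1$ and the increment preserves monotonicity. The genuine gap is the claim that the decrement at your $k$ (smallest index with $d_k'<d_k$) is ``symmetrically'' safe. It is not: the argument at $j$ used that no index before $j$ is a surplus, but the index $k+1$ lies \emph{after} the first deficit, so nothing prevents $d_{k+1}'<d_{k+1}$, and in particular nothing prevents $d_{k+1}=d_k$. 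Concretely, take $D=(3,2,2,2,1,1,1)$ and $D'=(4,3,1,1,1,1,1)$: both are nonincreasing tree degree sequences of order $7$ with $D\LE D'$, your rule gives $j=0$ and $k=2$, and the resulting $D_1=(4,2,1,2,1,1,1)$ is not nonincreasing. This is not cosmetic: your inductive step applies the hypothesis to the pair $(D_1,D')$ and relies on both being sorted to locate surpluses/deficits and to read $\LE$ off partial sums, so the induction cannot proceed from this $D_1$.

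The repair is to let $k_0$ be your smallest deficit index and instead take $k$ to be the largest index with $d_k=d_{k_0}$, i.e.\ the end of that tie block (equivalently: perform the unit transfer on the multiset and re-sort; on the $j$ side no adjustment is needed, since the first surplus is automatically the first entry of its tie block). Then $d_{k+1}<d_k$, so monotonicity survives; $k$ is still a deficit because $d_k'\le d_{k_0}'<d_{k_0}=d_k$; and $j<k_0\le k$. One extra verification is then required, because your ``deficit not yet reached'' justification of $\sum_{i\le m}d_i+1\le\sum_{i\le m}d_i'$ only covers $j\le m<k_0$: for $k_0\le m<k$, equality $\sum_{i\le m}d_i=\sum_{i\le m}d_i'$ is impossible, since then $d_{m+1}=d_{k_0}>d_{m+1}'$ would give $\sum_{i\le m+1}d_i>\sum_{i\le m+1}d_i'$, contradicting $D\LE D'$; hence the needed strict inequality holds on all of $[j,k)$. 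With this corrected choice of $k$, the rest of your argument (entries staying in range, $\Delta$ dropping by one, termination) goes through.
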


%In this section, our aim is to prove the following theorem:
Theorem \ref{diffdegree} is already presented in \cite{Wei2022}. We present it here for completeness, and we provide a slightly different proof.
\begin{thm}[\cite{Wei2022}]\label{diffdegree}
Let $D$ and $D'$ be the degree sequences of trees of the same order such that $D \LE D'$. Then we have
\[\SO(\Mi(D)) \leq \SO(\Mi(D')),\]
Equality holds if and only if $D=D'$.
\end{thm}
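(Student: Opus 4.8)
The plan is to reduce the comparison to elementary one-unit steps via Lemma~\ref{lemzhang} and then to realise each step by transferring a branch on the extremal tree $\Mi(D)$, controlling the effect on $\SO$ through the convexity of $t\mapsto\sqrt{t^2+c^2}$.

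First I would apply Lemma~\ref{lemzhang}: since $D\LE D'$ there is a chain $D\LE D_1\LE\cdots\LE D_m\LE D'$ in which consecutive sequences either coincide or differ in exactly two coordinates, one raised by $1$ and a later (hence no larger) one lowered by $1$. By transitivity it suffices to prove that $\SO(\Mi(\cdot))$ never decreases across a single step and strictly increases across a nontrivial one; the equality statement then follows, since $D\ne D'$ forces at least one nontrivial step in the chain. So assume from now on that $D'$ is obtained from $D$ by replacing one entry $d_j$ by $d_j+1$ and a smaller-or-equal entry $d_k$ by $d_k-1$, where $d_j\ge d_k\ge2$ (the lower bound because $d_k-1$ is still a tree-degree, hence $\ge1$).

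Let $a$ be a vertex of $\Mi(D)$ of degree $d_j$ and $b\ne a$ one of degree $d_k$. Rooting $\Mi(D)$ at $a$ and using $\de(b)=d_k\ge2$, the vertex $b$ has a child $w$; put $S'=\Mi(D)-bw+aw$. This is again a tree (deleting $bw$ detaches the branch below $w$, which $aw$ reattaches on the root side), and its degree sequence is exactly $D'$. Since $\Mi(D')$ is maximal in $\mathbb{T}_{D'}$ by Theorem~\ref{alter}, we have $\SO(\Mi(D'))\ge\SO(S')$, so it is enough to show $\SO(S')>\SO(\Mi(D))$. Writing $f(d,c)=\sqrt{d^2+c^2}$ and $g_c(d)=f(d+1,c)-f(d,c)$ and assuming first that $a,b$ are non-adjacent, the difference $\SO(S')-\SO(\Mi(D))$ splits as
\[
\sum_{x\in N(a)} g_{\de(x)}(d_j)\;-\;\sum_{y\in N(b)\setminus\{w\}} g_{\de(y)}(d_k-1)\;+\;\bigl(f(d_j+1,\de(w))-f(d_k,\de(w))\bigr),
\]
where the first sum is the gain from raising $\de(a)$, the second the loss from lowering $\de(b)$, and the last bracket is positive because $d_j+1>d_k$ (in the adjacent case the shared edge $ab$ instead contributes $f(d_j+1,d_k-1)-f(d_j,d_k)>0$, which only helps).

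Since $t\mapsto\sqrt{t^2+c^2}$ is strictly convex, $g_c(d)$ is positive and increasing in $d$, and a direct check gives $\partial g_c/\partial c<0$, so $g_c(d)$ is decreasing in $c$. Hence, once I exhibit an injection $\phi\colon N(b)\setminus\{w\}\to N(a)$ with $\de(\phi(y))\le\de(y)$, each loss term is dominated by a distinct gain term, $g_{\de(\phi(y))}(d_j)\ge g_{\de(y)}(d_j)\ge g_{\de(y)}(d_k-1)$, and the $d_j-(d_k-1)\ge1$ leftover gain terms together with the positive replacement term make the whole difference strictly positive, as required. The main obstacle is precisely producing this degree-dominating injection, i.e. showing that the higher-degree vertex $a$ has systematically smaller neighbour degrees than the lower-degree vertex $b$. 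This is the alternating character of $\Mi(D)$ recorded in Lemma~\ref{reverse}, but that lemma only compares children of vertices on a common level, whereas $a$ and $b$ may lie on different levels. I expect the crux to be propagating the comparison across levels: I would exploit the freedom in choosing $a$, $b$, the root and $w$—taking $a$ to be a pseudo-leaf root of degree $d_j$, whose neighbours are $d_j-1$ leaves and a single connector, so that every $y\in N(b)\setminus\{w\}$ is matched to a leaf-neighbour of $a$—and otherwise argue by induction along the recursive construction of $\Mi(D)$ in Definition~\ref{Def:Algf}.
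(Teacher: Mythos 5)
Your overall skeleton is the same as the paper's: reduce to a single unit step via Lemma~\ref{lemzhang}, realise that step by moving one child from the lower-degree vertex to the higher-degree vertex of $\Mi(D)$, bound the result by $\SO(\Mi(D_1))$ using Theorem~\ref{alter}, and control the local change in $\SO$ by monotonicity/convexity of $d\mapsto\sqrt{d^2+c^2}$ (the paper packages this as Lemma~\ref{movingbig}, using $\phi$ from Lemma~\ref{lemChen}). However, the one place where real work is needed --- the degree-dominating injection $\phi\colon N(b)\setminus\{w\}\to N(a)$ --- is exactly where your proposal stops being a proof. You correctly flag that Lemma~\ref{reverse} only compares children of vertices on a \emph{common level}, but neither of your two proposed fixes closes this. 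Taking $a$ to be a pseudo-leaf root of degree $d_j$ is not available in general: in $\Mi(D)$ only the largest entries of the reduced degree sequence are realised as pseudo-leaf roots, so if the entry being raised is an intermediate one (e.g.\ raising the degree-$3$ vertex in $\Mi(5,5,3,2)$, which is adjacent to a degree-$5$ vertex, a degree-$2$ vertex and a leaf), no pseudo-leaf of degree $d_j$ exists. The fallback ``induction along the recursive construction of $\Mi(D)$'' is not carried out, and it is not clear how it would produce the injection, since rooting at $a$ destroys the symmetry between $a$ and $b$ that the comparison needs.

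The missing idea, which is how the paper's Lemma~\ref{movingbig} works, is a re-rooting trick: instead of rooting at $a$, root $\Mi(D)$ at the midpoint of the $a$--$b$ path --- a vertex root if that path has even length, an edge root if odd --- so that $a$ and $b$ lie on the \emph{same} level. Because $\Mi(D)$ is maximal, Lemma~\ref{reverse} holds for \emph{every} choice of vertex or edge root simultaneously, so this re-rooting costs nothing. Applied at the level of $a$ and $b$ it gives $\max_j\de(a^1_j)\le\min_j\de(b^1_j)$ for their children, and applied one level up (together with $\de(a)\ge\de(b)$, swapping roles when degrees tie) it gives $\de(a^0)\le\de(b^0)$ for their parents; these two facts are precisely the injection you were trying to construct, with parent matched to parent and children matched to children. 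With that in hand, your convexity computation goes through as in \eqref{Eq:moving}. As written, though, your argument has a genuine gap at its crux.
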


Before proving the main theorem, we will need a couple of Lemmas.

\begin{lem}[\cite{chen2022}]\label{lemChen}
Let $\phi(x, y) = \sqrt{x^2 + y^2} -\sqrt{(x - 1)^2 + y^2}$, where $x > 1$ and $y > 0$, then the function $\phi(x, y)$ is strictly increasing with respect to $x$ and strictly decreasing with respect to $y$.
\end{lem}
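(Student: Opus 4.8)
The plan is to treat this as a routine exercise in calculus applied one variable at a time, computing the two partial derivatives of $\phi$ and determining their signs on the region $x>1$, $y>0$.

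First I would establish monotonicity in $x$. Differentiating gives
\[
\frac{\partial \phi}{\partial x}=\frac{x}{\sqrt{x^2+y^2}}-\frac{x-1}{\sqrt{(x-1)^2+y^2}}.
\]
To see that this is positive I would introduce the auxiliary function $g(t)=t/\sqrt{t^2+y^2}$ for fixed $y>0$, so that $\partial \phi/\partial x=g(x)-g(x-1)$. A short computation yields $g'(t)=y^2/(t^2+y^2)^{3/2}>0$, so $g$ is strictly increasing; since $x>x-1$, it follows that $g(x)>g(x-1)$ and hence $\partial\phi/\partial x>0$. Thus $\phi$ is strictly increasing in $x$.

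Next I would handle monotonicity in $y$. Differentiating gives
\[
\frac{\partial \phi}{\partial y}=y\left(\frac{1}{\sqrt{x^2+y^2}}-\frac{1}{\sqrt{(x-1)^2+y^2}}\right).
\]
Because $x>1$ forces $x^2>(x-1)^2$, we have $x^2+y^2>(x-1)^2+y^2$, so the first reciprocal is strictly smaller than the second and the bracketed difference is negative; combined with $y>0$ this gives $\partial\phi/\partial y<0$. Hence $\phi$ is strictly decreasing in $y$.

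I do not anticipate a genuine obstacle here; the only mildly delicate point is that $\partial\phi/\partial x$ is a difference of two superficially similar terms whose sign is not visible by inspection, which is precisely why the auxiliary function $g$ and its manifestly positive derivative are worth isolating to make the comparison rigorous. Everything else follows from the elementary inequality $x^2>(x-1)^2$, valid for all $x>1/2$ and in particular throughout the region $x>1$.
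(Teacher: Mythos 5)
Your proof is correct. Note, however, that the paper itself contains no proof of this statement: it is quoted directly from \cite{chen2022} and used as a black box, so there is no internal argument to compare yours against. Your computation is the standard one and fills in the details cleanly: the partial derivative in $x$ equals $g(x)-g(x-1)$ for $g(t)=t/\sqrt{t^2+y^2}$, and since $g'(t)=y^2/(t^2+y^2)^{3/2}>0$ the difference is positive (this works for all real $t$, so no care is needed about the sign of $x-1$); the partial derivative in $y$ is negative because $x^2>(x-1)^2$ for $x>1$ makes the first reciprocal the smaller one. The only stylistic remark is that the auxiliary function $g$, while a nice way to organize the $x$-comparison, is not strictly necessary: the inequality
\[
\frac{x}{\sqrt{x^2+y^2}}>\frac{x-1}{\sqrt{(x-1)^2+y^2}}
\]
can also be verified directly by cross-multiplying and squaring, since both sides are positive when $x>1$. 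Either way, your argument is complete and would make the paper self-contained on this point.
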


\begin{lem}\label{movingbig}
Let $\Mi(D)$ be the alternating greedy tree corresponding to the degree sequence $D$. Let $x,y$ be two vertices in $\Mi(D)$ such that $\de_{\Mi(D)}(x) \geq \de_{\Mi(D)}(y) \geq 2$. Consider that $\Mi(D)$ is rooted such that $x$ and $y$ are at the same level. Let $x'$ be a child of $y$. Let $T'=\Mi(D)-yx'+xx'$. Then
\begin{itemize}
    \item[1)] $T'\in \mathbb{T}_{D'}$, such that $D \LE D'$;
    \item[2)] $\SO(\Mi(D)) < \SO(T')$.
\end{itemize}
\end{lem}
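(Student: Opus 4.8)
The plan is to prove the two claims separately: part~1) is routine, while part~2) rests on the monotonicity of $\phi$ from Lemma~\ref{lemChen} combined with the fact that $\Mi(D)$ satisfies Lemma~\ref{reverse}. For part~1) I would first check that $T'$ is a tree: since $x$ and $y$ lie at the same level, $x$ is not a descendant of $y$, so $x\notin V(T_{x'})$; deleting $yx'$ detaches the branch $T_{x'}$ and adding $xx'$ reattaches it to the other component, leaving a connected graph on $n$ vertices with $n-1$ edges. The only degrees that change are $\de(x)$, which goes up by $1$, and $\de(y)$, which goes down by $1$. As $\de(x)\ge\de(y)$, this is an elementary transfer that raises a larger entry and lowers a smaller one, so $D'$ majorises $D$ (i.e.\ $D\LE D'$), and since the values change by $\pm1$ with $\de(x)+1>\de(x)\ge\de(y)>\de(y)-1$ the multisets differ, so $D\ne D'$.

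For part~2) the only edges whose contribution changes are those incident to $x$ or $y$ (I assume first $x\not\sim y$; the edge-root case $x\sim y$ produces one extra edge $xy$ whose contribution changes from $\sqrt{\de(x)^2+\de(y)^2}$ to $\sqrt{(\de(x)+1)^2+(\de(y)-1)^2}$, an increase by the same computation, and has no parent edges to worry about, so it is easier). Writing $a=\de(x)$, $b=\de(y)$, neighbourhoods taken in $\Mi(D)$, and $\phi$ as in Lemma~\ref{lemChen}, a direct accounting gives
\begin{align*}
\SO(T')-\SO(\Mi(D)) &= \sum_{w\in N(x)}\phi\big(a+1,\de(w)\big) - \sum_{w\in N(y)\setminus\{x'\}}\phi\big(b,\de(w)\big) \\
&\quad + \sqrt{(a+1)^2+\de(x')^2}-\sqrt{b^2+\de(x')^2}.
\end{align*}
Because $a+1>b$ the last bracket is positive and is at least $\phi(a+1,\de(x'))$; absorbing this into the first sum turns it into a sum over $N(x)\cup\{x'\}$, the $a+1$ neighbours of $x$ in $T'$. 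It then suffices to produce an injection $\mu$ from $N(y)\setminus\{x'\}$ (the $b-1$ neighbours of $y$ in $T'$) into $N(x)\cup\{x'\}$ with $\de(\mu(w))\le\de(w)$ for each $w$: by Lemma~\ref{lemChen} this yields $\phi(a+1,\de(\mu(w)))\ge\phi(a+1,\de(w))>\phi(b,\de(w))$, and the unmatched positive terms finish the strict inequality.

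When $\de(x)>\de(y)$ the injection is immediate from Lemma~\ref{reverse}. Applied at level $i$ it shows every child of $x$ has degree at most every child of $y$; applied at level $i-1$ to the parents $p,q$ of $x,y$ it forces $\de(p)\le\de(q)$ (if $\de(p)>\de(q)$ it would give $\de(x)\le\de(y)$, a contradiction). Hence I match $q$ to $p$ and the remaining children of $y$ to distinct children of $x$, each match respecting $\de(\mu(w))\le\de(w)$, with children of $x$ and the image of $x'$ left over as positive surplus.

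The hard part — and the main obstacle — is the equal case $\de(x)=\de(y)$, where Lemma~\ref{reverse} gives no comparison at all between the children, or the parents, of $x$ and $y$; moreover a crude estimate (sending all degrees of $N(x)$ to infinity and those of $N(y)$ to $1$) shows the target inequality is \emph{false} for an arbitrary tree, so it genuinely uses the shape of $\Mi(D)$. I would resolve it through the symmetry of the alternating greedy tree: same-degree vertices at the same level of $\Mi(D)$ have isomorphic complete branches, so after the $\SO$-preserving swaps of Remark~\ref{same} one may take $x$ and $y$ to have identical children-degree multisets, whereupon every surviving $\phi$-difference has the form $\phi(a+1,\cdot)-\phi(b,\cdot)>0$ with equal second arguments and the matching is trivial. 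The delicate residue is controlling the two parent edges when $x$ and $y$ do not share a parent; this needs the correlation between parent and child degrees built into the inductive description of $\Mi(D)$ used in Lemma~\ref{Lem:MDLem8}, and is where I expect the real work to lie.
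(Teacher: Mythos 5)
Part 1) and the case $\de(x)>\de(y)$ of part 2) of your proposal are correct and essentially coincide with the paper's proof: your expression for $\SO(T')-\SO(\Mi(D))$ is the paper's equation \eqref{Eq:moving}, and your injection $\mu$ is a repackaging of the paper's term-by-term comparison, in which $\de(x^0)\le\de(y^0)$ and $\max_j \de(x^1_j)\le\min_j\de(y^1_j)$ are deduced from Lemma \ref{reverse} exactly as you deduce them. The genuine gap is the case $\de(x)=\de(y)$, which you rightly single out as the delicate one but do not prove. Your proposed resolution rests on the claim that same-degree vertices at the same level of $\Mi(D)$ have isomorphic complete branches, so that after the $\SO$-preserving swaps of Remark \ref{same} one may assume $x$ and $y$ have identical children-degree multisets. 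That claim is false, and the paper's own Figure \ref{cha5.fig3} refutes it: in $\Mi(5,4,4,4,3,3,3,2)$ the two vertices of degree $3$ at distance $1$ from the root have children-degree multisets $\{4,4\}$ and $\{1,2\}$, their complete branches are not even of the same order, and no exchange of branches between them can equalise the multisets (the union $\{1,2,4,4\}$ admits no balanced split). So the symmetry you invoke does not exist; and since you then explicitly defer the parent-edge analysis (``where I expect the real work to lie''), the equal-degree case of your proof is left open. Your instinct that this case cannot be waved away is sound: with the labels chosen so that $x$ is the vertex with large-degree children, the term-by-term estimate genuinely fails, e.g.\ $\phi(4,4)\approx 0.66 < 0.93\approx\phi(3,1)$, so no matching argument of the proposed form can work for that labelling.

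For comparison, the paper closes this case not by symmetry of $\Mi(D)$ but by a relabelling device: when $\de(x)=\de(y)$ the two vertices satisfy the hypotheses interchangeably, and the paper chooses which one plays the role of $x$ (``swapping $x$ and $y$ if needed'') so that both $\de(x^0)\le\de(y^0)$ and $\max_j\de(x^1_j)\le\min_j\de(y^1_j)$ hold, after which the same $\phi$-computation goes through verbatim; in the Figure \ref{cha5.fig3} example this forces the receiving vertex to be the one with children degrees $\{1,2\}$. This is sufficient for the only place the lemma is used, namely the proof of Theorem \ref{diffdegree}, where the two vertices of degrees $d_i,d_j$ and the child to be moved are chosen freely. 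If you want to prove the lemma for an arbitrary labelling of a tied pair (as its statement literally demands), something beyond both your argument and the paper's is needed — either a direct global estimate of \eqref{Eq:moving} or a structural argument about which tied configurations can occur in $\Mi(D)$ — so the honest repair of your proposal is to adopt the paper's choice of labels and note that this weaker form of the lemma is all that Theorem \ref{diffdegree} requires.
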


\begin{proof}
The proof of 1) is straightforward. In fact, $\de_{\Mi(D)}(y)-1=\de_{T'}(y)$ and $\de_{\Mi(D)}(x)+1=\de_{T'}(x)$. Since $\de_{\Mi(D)}(x) \geq \de_{\Mi(D)}(y)$, we have $D \LE D'$.

Next, for 2), let $x^0$ and $y^0$ be the respective parents of $x$ and $y$. It is possible that $x^0=y^0$. Let $x^1_1,\dots,x^1_{\de_{\Mi(D)}(x)-1}$, and $y^1_1,\dots,y^1_{\de_{\Mi(D)}(y)-2}$ be the children of $x$ and $y$ different from $x'$. Since $\Mi(D)$ satisfies Lemma \ref{reverse}, we have (by swapping $x$ and $y$ if needed, when $\de_{\Mi(D)}(x^0) = \de_{\Mi(D)}(y_0)$)
\begin{align}
    &\de_{\Mi(D)}(x^0) \leq \de_{\Mi(D)}(y^0) \text{ and }\label{dpar}\\
    &\max_j \de_{\Mi(D)}(x^1_j) \leq \min_j \de_{\Mi(D)}(y^1_j)\label{dchild}.
\end{align} 

The contribution of the edges that do not contain $x$ nor $y$ as edge-point is not affected by this transformation. Let us compute the difference between the Sombor indices of the two trees. For ease of notation, we will denote by 
\begin{align*}
    &\de_{\Mi(D)}(x)=a, \quad \de_{\Mi(D)}(y)=b,   \de_{\Mi(D)}(x^0)=p_0,\quad \de_{\Mi(D)}(y^0)=p'_0,\\
    &\de_{\Mi(D)}(x^i_j)=c^i_j,\quad \de_{\Mi(D)}(y^i_j)=c'^i_j, \quad \de_{\Mi(D)}(x')=c .
\end{align*}

\begin{align}
    &\SO(T')-\SO(T)= \sqrt{(a+1)^2+p_0^2}+\sqrt{(b-1)^2+p_0'^2} +\sum_{j=1}^{a-1}\left(\sqrt{(a+1)^2+(c^1_{j})^2}\right)\nonumber\\
    &+\sum_{j=1}^{b-2}\left(\sqrt{(b-1)^2+(c'^1_j)^2}\right)+\sqrt{(a+1)^2+c^2}\nonumber\\
    &-\sqrt{a^2+p_0^2}-\sqrt{b^2+p_0'^2}-\sum_{j=1}^{a-1}\left(\sqrt{a^2+(c^1_{j})^2}\right)-\sum_{j=1}^{b-2}\left(\sqrt{b^2+(c'^1_j))^2}\right)-\sqrt{b^2+c^2}\nonumber\\
    \label{Eq:moving}
    &=\phi(a+1,p_0)-\phi(b,p_0')+\sum_{j=1}^{a-1}\phi(a+1,c^1_j)-\sum_{j=1}^{b-2}\phi(b,c'^1_j)\\ &\hspace{9cm}+\sqrt{(a+1)^2+c^2}-\sqrt{b^2+c^2}.\nonumber
\end{align}
Using Lemma \ref{lemChen} and conditions in \eqref{dpar}, we get
\begin{align*}\phi(a+1,p_0)>\phi(b,p_0)>\phi(b,p'_0).\end{align*}
Using again Lemma \ref{lemChen} and conditions in \eqref{dchild}, we have
\begin{align*}
   & \sum_{j=1}^{a-1}\phi(a+1,c^1_j)-\sum_{j=1}^{b-2}\phi(b,c'^1_j)\geq \sum_{j=1}^{b-2}\left(\phi(a+1,c^1_j)-\phi(b,c'^1_j)\right)>0.
   \end{align*}
Finally, it is clear that \[\sqrt{(a+1)^2+c^2}-\sqrt{b^2+c^2}>0.\]   
Combining all these results, we obtain $\SO(T')-\SO(T)>0$ as desired.
\end{proof} 

\begin{proof}[Proof of Theorem \ref{diffdegree}]
By Lemma \ref{lemzhang}, there exists a degree sequence $D_1$ with $D \LE D_1 \LE D'$ such that $D$ and $D_1$ only differ in two places, namely $D = (d_0, d_1,\dots, d_i,\dots, d_j,\dots, d_{n-1})$ and $D_1 = (d_0, d_1, \dots , d_{i-1},d_i + 1, d_{i+1} \dots ,d_{j-1}, d_j - 1,d_{j+1},\dots, d_{n-1})$ with $i <
j$ and $d_j \geq 2$. Let us consider two vertices $u, v$ in the alternating greedy tree $\Mi(D)$, such that
$\de_{\Mi(D)}(u) = d_i$ and $\de_{\Mi(D)}(v) = d_j$. %By the definition of majorization, we may
%assume $d_i \geq d_j \geq 2$. 
Now, let $x$ be a child of $v$, and $T'=\Mi(D)-vx + ux$. Lemma \ref{movingbig} provides that $T' \in \mathbb{T}_{D_1}$, and
$\SO(T') > \SO(\Mi(D))$. Furthermore, by Theorem \ref{alter}, we have $\SO(T') \leq \SO(\Mi(D_1))$. Thus
\[\SO(\Mi(D_1)) \geq \SO(T') > \SO(\Mi(D)).\]
By iterating the same process, the theorem holds.
\end{proof}

It follows from the Theorems \ref{alter} and \ref{diffdegree} that $\SO(T)\leq \SO(\mathcal{M}(D))$ for any tree $T$ with degree sequence majorized by $D$. Hence the following corollaries.
\begin{cor}[\cite{gutman2021}]
For any $n$-vertex tree $T$, we have $\SO(T) \leq \SO(\mathcal{M}((n-1,1,\dots,1)).$ Equality only holds if the two graphs are isomorphic.
\end{cor}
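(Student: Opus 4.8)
The plan is to chain the two comparison theorems already proved. Theorem~\ref{alter} bounds $\SO(T)$ by $\SO(\Mi(D))$ within a fixed degree sequence $D$, and Theorem~\ref{diffdegree} compares alternating greedy trees across degree sequences ordered by $\LE$; the only real content left is a single majorization check.

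First I would let $D=(d_1,\dots,d_n)$, with $d_1\geq\cdots\geq d_n$, be the degree sequence of the given $n$-vertex tree $T$, so that Theorem~\ref{alter} gives $\SO(T)\leq\SO(\Mi(D))$ at once. It then suffices to prove $D\LE(n-1,1,\dots,1)$, since Theorem~\ref{diffdegree} would then yield $\SO(\Mi(D))\leq\SO(\Mi((n-1,1,\dots,1)))$, and the two inequalities combine to give the claim.

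The key step is this majorization. Writing $B=(n-1,1,\dots,1)$, one has $\sum_{i=1}^k b_i=n+k-2$ for every $k\geq 1$. Since $A\LE B$ requires $B$ to majorize every permutation of $A$, and the largest partial sum over all permutations of $D$ is $d_1+\cdots+d_k$ (as $D$ is non-increasing), it is enough to check $\sum_{i=1}^k d_i\leq n+k-2$ for each $k$. This follows from the degree-sum identity $\sum_{i=1}^n d_i=2n-2$ together with $d_i\geq 1$ for all $i$: the discarded $n-k$ entries contribute at least $n-k$, so $\sum_{i=1}^k d_i\leq(2n-2)-(n-k)=n+k-2$.

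For the equality case I would run the chain backwards: $\SO(T)=\SO(\Mi((n-1,1,\dots,1)))$ forces equality in Theorem~\ref{diffdegree}, hence $D=(n-1,1,\dots,1)$, and the unique tree with this degree sequence is the star $\Mi((n-1,1,\dots,1))$, whence $T$ is isomorphic to it. I expect no genuine obstacle here: the argument is essentially a packaging of the two cited theorems, with the elementary degree-sum bound on partial sums being the only computation required.
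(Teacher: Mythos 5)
Your proposal is correct and follows essentially the same route as the paper, which derives this corollary directly from Theorems \ref{alter} and \ref{diffdegree} via the observation that every tree degree sequence is majorized by $(n-1,1,\dots,1)$. Your explicit verification of the majorization using $\sum_i d_i = 2n-2$ and $d_i \geq 1$ simply fills in a computation the paper leaves implicit.
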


\vspace{-0.2cm}

\begin{cor}
For any $n$-vertex tree $T$ with maximum degree $\Delta$, we have  
$$\SO(T)\leq \SO(\mathcal{M}(\Delta,\dots,\Delta,r+1,1,\dots,1)),$$ for some $0\leq r< d.$
\end{cor}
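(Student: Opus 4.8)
The plan is to identify the unique degree sequence $D^\ast$ that majorizes every degree sequence of an $n$-vertex tree with maximum degree $\Delta$, verify that $D^\ast$ has exactly the shape $(\Delta,\dots,\Delta,r+1,1,\dots,1)$ claimed, and then invoke the observation preceding the corollaries, namely that $\SO(T)\leq \SO(\Mi(D))\leq \SO(\Mi(D^\ast))$ whenever $D\LE D^\ast$ (this combines Theorems \ref{alter} and \ref{diffdegree}).

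First I would pin down $D^\ast$ via the division algorithm. Writing $n-2=q(\Delta-1)+r$ with $0\le r<\Delta-1$ (so the ``$d$'' in the statement is $\Delta-1$), I take $D^\ast=(\Delta,\dots,\Delta,r+1,1,\dots,1)$ with exactly $q$ entries equal to $\Delta$ and $n-q-1$ entries equal to $1$. A count shows $D^\ast$ has $n$ positive entries summing to $q\Delta+(r+1)+(n-q-1)=2n-2$, so it is a genuine tree degree sequence; moreover $r+1\le\Delta-1<\Delta$, so listing the entries in non-increasing order is consistent and $\Delta$ is indeed the maximum. The reduced degree sequence is $(\Delta,\dots,\Delta,r+1)$ (or $(\Delta,\dots,\Delta)$ when $r=0$), so $\Mi(D^\ast)$ is well defined.

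The heart of the argument is to show $D\LE D^\ast$ for every non-increasing degree sequence $D=(d_1,\dots,d_n)$ of an $n$-vertex tree with $d_1\le\Delta$; I would do this by comparing partial sums. For $k\le q$, each $d_i\le \Delta$ gives $\sum_{i=1}^{k} d_i\le k\Delta=\sum_{i=1}^{k} d^\ast_i$. For $k\ge q+1$, since the remaining $n-k$ entries of $D$ are each at least $1$ while the total is $2n-2$, one gets $\sum_{i=1}^{k} d_i\le (2n-2)-(n-k)=n-2+k$; and a direct evaluation, using $q(\Delta-1)+r=n-2$, gives $\sum_{i=1}^{k} d^\ast_i=q\Delta+(r+1)+(k-q-1)=n-2+k$ as well. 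Hence the partial sums of $D^\ast$ dominate those of $D$ in both ranges, which is precisely majorization in the sense of the paper, so $D\LE D^\ast$.

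Finally, given any $n$-vertex tree $T$ with maximum degree $\Delta$ and degree sequence $D$, Theorem \ref{alter} yields $\SO(T)\le\SO(\Mi(D))$, and Theorem \ref{diffdegree} together with $D\LE D^\ast$ yields $\SO(\Mi(D))\le\SO(\Mi(D^\ast))$, giving the claim. The one genuinely delicate point is the majorization step: one must check the boundary index $k=q$ carefully, and must confirm that $q=\lfloor (n-2)/(\Delta-1)\rfloor$ is the largest number of $\Delta$'s compatible with completing to a valid tree degree sequence (equivalently $q(\Delta-1)\le n-2$), since a smaller count of $\Delta$'s would fail to majorize some admissible $D$. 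The rest is bookkeeping.
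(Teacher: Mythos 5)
Your proposal is correct and follows exactly the route the paper intends: the paper gives no explicit proof of this corollary, deriving it directly from the remark that $\SO(T)\leq \SO(\Mi(D'))$ whenever the degree sequence of $T$ is majorized by $D'$ (Theorems \ref{alter} and \ref{diffdegree} combined). Your contribution is simply to make explicit the partial-sum verification that $(\Delta,\dots,\Delta,r+1,1,\dots,1)$, with $q=\lfloor (n-2)/(\Delta-1)\rfloor$ copies of $\Delta$ and $r=(n-2)-q(\Delta-1)$, majorizes every tree degree sequence with maximum degree $\Delta$ --- a computation the paper leaves to the reader, and which also clarifies that the undefined ``$d$'' in the statement should be read as $\Delta-1$.
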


\vspace{-0.2cm}

\begin{cor}
For any $n$-vertex tree $T$ with exactly $\ell$ leaves, we have 
$$\SO(T) = \SO(\mathcal{M}(\ell,2,\dots,2,1\dots,1)).$$
\end{cor}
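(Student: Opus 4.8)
The plan is to reduce the statement to the two structural results already available, Theorem~\ref{alter} and Theorem~\ref{diffdegree}, by exhibiting $(\ell,2,\dots,2,1,\dots,1)$ as the majorization-maximal degree sequence among all $n$-vertex trees with exactly $\ell$ leaves. Write $D=(d_1,\dots,d_n)$, sorted nonincreasingly, for the degree sequence of $T$. Having exactly $\ell$ leaves means precisely $\ell$ of the $d_i$ equal $1$, and since these sit at the tail we have $d_i\geq 2$ for $1\leq i\leq n-\ell$ and $d_i=1$ for $i>n-\ell$. Set $D^\ast=(\ell,2,\dots,2,1,\dots,1)$, with a single $\ell$, exactly $n-\ell-1$ twos and $\ell$ ones; by the leaf-count formula this is itself the degree sequence of an $n$-vertex tree with $\ell$ leaves, so $\mathcal{M}(D^\ast)$ is defined and is realized by an actual tree. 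Once I show $D\LE D^\ast$, Theorem~\ref{diffdegree} gives $\SO(\mathcal{M}(D))\leq \SO(\mathcal{M}(D^\ast))$, while Theorem~\ref{alter} gives $\SO(T)\leq \SO(\mathcal{M}(D))$ because $\mathcal{M}(D)$ is maximal in $\mathbb{T}_D$; chaining these yields the claimed bound, with equality exactly when $D=D^\ast$ and $T$ is a branch-swap of $\mathcal{M}(D^\ast)$, which is the sense in which the ``equality'' is to be read.

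The only real content is the majorization $D\LE D^\ast$, and it is cleanest to phrase it through the ``excess over $2$''. Both $D$ and $D^\ast$ sum to $2(n-1)$, so it suffices to compare partial sums of the nonincreasing sequences. For $k\geq n-\ell$ the last entries of $D$ are all $1$, so $\sum_{i=1}^k d_i=2(n-1)-\ell+(k-(n-\ell))=n+k-2$, which is exactly $\sum_{i=1}^k d_i^\ast$; the two sequences have identical partial sums on the tail. For $1\leq k\leq n-\ell$ I write $\sum_{i=1}^k d_i=2k+\sum_{i=1}^k (d_i-2)$. The non-leaf excesses $d_i-2$ (for $1\leq i\leq n-\ell$) are nonnegative and sum to $\bigl(2(n-1)-\ell\bigr)-2(n-\ell)=\ell-2$, hence every prefix sum of them is at most $\ell-2$. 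Therefore $\sum_{i=1}^k d_i\leq 2k+(\ell-2)=\ell+2(k-1)=\sum_{i=1}^k d_i^\ast$, which is the required inequality; the case $k=1$ recovers the bound $d_1\leq \ell$ on the maximum degree. This establishes $D\LE D^\ast$.

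The step I expect to be the only potential obstacle is exactly this partial-sum comparison, and the observation above is what makes it painless: because every internal vertex has degree at least $2$, the quantities $d_i-2$ are nonnegative, so truncating the fixed total $\ell-2$ to a prefix can only decrease it, forcing every prefix of $D$ below the corresponding prefix of $D^\ast$. No direct appeal to Lemma~\ref{lemzhang} is needed, since the step-by-step recovery of degree sequences is already internal to Theorem~\ref{diffdegree}. Since $D^\ast$ is attained by a genuine tree with $\ell$ leaves, the bound is tight, and Theorem~\ref{alter} identifies the extremal trees as the branch-swaps of $\mathcal{M}(D^\ast)$, completing the argument along the same template used for the preceding two corollaries.
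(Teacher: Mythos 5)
Your proposal is correct and takes essentially the same route as the paper: the paper derives this corollary (like its neighbours) from the single observation that Theorems \ref{alter} and \ref{diffdegree} together give $\SO(T)\leq \SO(\Mi(D'))$ whenever the degree sequence $D$ of $T$ satisfies $D\LE D'$, which is precisely your chain $\SO(T)\leq \SO(\Mi(D))\leq \SO(\Mi(D^\ast))$ with $D^\ast=(\ell,2,\dots,2,1,\dots,1)$. The majorization $D\LE D^\ast$, which the paper leaves implicit, is exactly what your prefix-sum/excess argument supplies, and you are also right to read the stated ``$=$'' as a typo for ``$\leq$''.
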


\vspace{-0.2cm}

\begin{cor}[\cite{li2022}]
Let $T$ be the $n$-vertex tree $T$ with diameter $d$. 
Then we have $$\SO(T )\leq \SO(\mathcal{M}(n-d+1,2,\dots,2,1,\dots,1)).$$ 
\end{cor}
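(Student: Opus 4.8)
The plan is to prove the final corollary, which bounds the Sombor index of an $n$-vertex tree $T$ with diameter $d$ by $\SO(\mathcal{M}(n-d+1,2,\dots,2,1,\dots,1))$. The strategy is to combine the diameter constraint with the majorization machinery already developed in the paper, so that the corollary reduces to an application of Theorem \ref{diffdegree}. First I would extract the reduced degree sequence of $T$ and show it is majorized by the target reduced degree sequence; then the bound follows immediately from $\SO(\mathcal{M}(D)) \leq \SO(\mathcal{M}(D'))$ whenever $D \LE D'$.

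The key structural observation is that a tree of diameter $d$ contains a path $P$ on $d+1$ vertices (a longest path). The two endpoints of this path are leaves, and the $d-1$ internal vertices of $P$ each have degree at least $2$. This accounts for the entry pattern $(\,\cdot\,,2,\dots,2,1,\dots,1)$ in the target sequence: the ``$2,\dots,2$'' corresponds to $d-1$ vertices of degree exactly $2$, and the remaining vertices (those not on $P$ beyond the single high-degree vertex) are forced to be leaves in the extremal tree. The single large entry $n-d+1$ arises because, to maximise the Sombor index among all diameter-$d$ trees, one concentrates all excess degree into one vertex: the $n$-vertex tree with diameter $d$ whose reduced degree sequence majorizes every other such sequence is the one consisting of a path of length $d$ with all remaining $n-d-1$ vertices attached as pendant leaves to a single central vertex of the path, giving that vertex degree $2 + (n-d-1) = n-d+1$.

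The main work, then, is the majorization step: I would show that for any $n$-vertex tree $T$ of diameter $d$, with reduced degree sequence $D_T$ sorted in nonincreasing order, one has $D_T \LE (n-d+1,2,\dots,2)$ where the number of $2$'s is $d-1$. To see this, note that both sequences have the same sum (determined by $n$ and by the number of leaves via the Handshake relation $k=-2t+2+\sum d_i$ recalled earlier) and the same total count consistent with diameter $d$; the target sequence is the ``most spread out'' one, having a single maximal entry with everything else as small as possible subject to maintaining diameter $d$. Verifying the partial-sum inequalities $\sum_{i=1}^k a_i \leq \sum_{i=1}^k b_i$ for all $k$ is the crux: the first partial sum is dominated because $n-d+1$ is the largest degree achievable while still forcing a path of length $d$, and subsequent partial sums follow since the target allows no degree above $2$ after the first entry. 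Once $D_T \LE D'$ is established, Theorem \ref{diffdegree} gives $\SO(\Mi(D_T)) \leq \SO(\Mi(D'))$, and since Theorem \ref{alter} guarantees $\SO(T) \leq \SO(\Mi(D_T))$, the chain $\SO(T) \leq \SO(\Mi(D_T)) \leq \SO(\Mi(D'))$ completes the proof.

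I expect the main obstacle to be the careful bookkeeping in the majorization argument, specifically confirming that the diameter constraint alone (rather than a fixed degree sequence) forces the partial-sum bounds. The subtlety is that diameter $d$ is a \emph{global} constraint on the tree, not directly a constraint on the degree sequence, so I must argue that no $n$-vertex diameter-$d$ tree can have a reduced degree sequence whose partial sums exceed those of $(n-d+1,2,\dots,2)$. The cleanest route is to fix the longest path and observe that any vertex off the maximal-degree vertex contributes at most what the target sequence allows, using that pushing degree onto a single vertex (while keeping diameter $d$ fixed) is exactly the majorization-maximal configuration. I would be careful to handle the boundary interaction between the ``$2$''-entries of the path and the large central entry, ensuring the count of $2$'s is exactly $d-1$ and that the equality case $D_T = D'$ corresponds to $T \cong \Mi(D')$.
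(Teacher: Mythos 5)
Your overall reduction is exactly the one the paper uses: show that the degree sequence $D_T$ of an $n$-vertex diameter-$d$ tree satisfies $D_T \LE D'=(n-d+1,2,\dots,2,1,\dots,1)$, and then chain $\SO(T)\leq \SO(\Mi(D_T))\leq \SO(\Mi(D'))$ via Theorems \ref{alter} and \ref{diffdegree}. The gap is in the step you yourself identify as the crux, the majorization. First, a bookkeeping error: the number of $2$'s is $d-2$, not $d-1$, because one of the $d-1$ internal vertices of the diameter path is the vertex of degree $n-d+1$; with $d-1$ twos your target sequence would have $1+(d-1)+(n-d-1)=n-1$ entries and so is not the degree sequence of an $n$-vertex tree at all. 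Second, and more seriously, your justification of the partial-sum inequalities is invalid. The claim that ``any vertex off the maximal-degree vertex contributes at most what the target sequence allows'' (i.e.\ at most $2$) is false pointwise: a diameter-$4$ tree on $7$ vertices can have degree sequence $(3,3,2,1,1,1,1)$, which has two vertices of degree $3$, while the target is $(4,2,2,1,1,1,1)$. Likewise, saying the later partial sums ``follow since the target allows no degree above $2$ after the first entry'' is circular --- the smallness of the later target entries is precisely what makes those inequalities nontrivial. Majorization here holds only in aggregate, and nothing in your sketch supplies the aggregate bound.

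The missing argument is a short counting one. The $d-1$ internal vertices of a diameter path are non-leaves, so the number $L$ of leaves satisfies $L\leq n-d+1$. Let $d_1\geq d_2\geq\cdots\geq d_n$ be the degrees of $T$. By the handshake identity, the sum of $d_i-2$ over the non-leaf vertices equals $(2n-2-L)-2(n-L)=L-2$, hence for every $k$,
$$\sum_{i=1}^{k}(d_i-2)\;\leq\;\sum_{i:\,d_i\geq 2}(d_i-2)\;=\;L-2\;\leq\;n-d-1,$$
which is exactly the required inequality $\sum_{i=1}^{k}d_i\leq (n-d+1)+2(k-1)$ matching the target partial sums for $k\leq d-1$; the case $k=1$ gives $\Delta\leq n-d+1$, which you asserted but did not prove. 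For $k\geq d-1$ the target partial sum is $n+k-2$, and since the last $n-k$ entries of $D_T$ are each at least $1$ we get $\sum_{i=1}^{k}d_i\leq (2n-2)-(n-k)=n+k-2$. With this lemma in place, $D_T\LE D'$ holds and your chain of inequalities completes the proof; without it, the only nontrivial step of the corollary remains unproved.
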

We cannot expect a version of Theorem \ref{diffdegree} for the greedy trees, that is to get $\SO(G(D'))\leq \SO(G(D))$ whenever $D\LE D'$. Otherwise $\mathcal{M}(n-1,1,\dots,1)=G((n-1,1,\dots,1))$ would at the same time have the maximum and the minimum Sombor index among all trees of order $n$, which is false if $n>3$. In that case it is possible to have more $n$-vertex trees than the star with different Sombor indices. 

For the rest of this section, we compare $\SO(G(D'))$ and $\SO(G(D))$ given that $D\LE D'$. We discuss cases where  $\SO(G(D'))\geq \SO(G(D))$. 

An appropriate merging of any two pendent path from a tree reduces the Sombor index.
\begin{lem}
\label{Lem:WeakDiffDegGD}
Let $P_k$ be maximal pendent path of length $k$ attached to a vertex $u$ of degree $x\geq 3$ in a graph $G'$, and let $P_j$  be another maximal pendent path of length $j$ attached to a vertex $v$ of degree $y$ such that $x\geq y\geq 3$ in the same graph $G'$. Let $z$ be the endpoint of $P_j$ with degree $1$ and $w$ the neighbor of $u$ in $P_k$. Let $G=G'-uw+zw$. Then $\SO(G')>\SO(G)$.
\end{lem}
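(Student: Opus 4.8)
The plan is to show that the merging operation $G = G' - uw + zw$, which reattaches the pendent path $P_k$ (formerly hanging at $u$ via its neighbour $w$) to the far endpoint $z$ of the other pendent path $P_j$, strictly decreases the Sombor index. First I would pin down exactly which edges change degree contributions. Reattaching $w$ from $u$ to $z$ lowers $\deg(u)$ from $x$ to $x-1$ and raises $\deg(z)$ from $1$ to $2$; every other vertex keeps its degree. Consequently the only edges whose Sombor contribution changes are the edges incident to $u$ (there are $x$ of them in $G'$, but one of these is $uw$ which is deleted, so $x-1$ survive with $u$ now of degree $x-1$) and the edges incident to $z$ (the single edge of $P_j$ meeting $z$, whose $z$-endpoint degree rises from $1$ to $2$, plus the new edge $zw$). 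I would write the difference $\SO(G') - \SO(G)$ as a sum over exactly these affected edges, being careful to treat separately the edge $uw$ that is removed and the edge $zw$ that is created.

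**Reducing to the function $\phi$.**

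The key idea is to express the change in terms of the function $\phi(x,y) = \sqrt{x^2+y^2} - \sqrt{(x-1)^2+y^2}$ from Lemma \ref{lemChen}. For each surviving edge $u t$ with $t \neq w$, the contribution changes from $\sqrt{x^2 + \deg(t)^2}$ to $\sqrt{(x-1)^2 + \deg(t)^2}$, so its decrease is precisely $\phi(x, \deg(t))$. Since $x \geq 3$, the path $P_k$ has length $k \geq 1$, and the maximality of the pendent paths, I would argue the relevant neighbours $t$ of $u$ (other than $w$) have controlled degrees; the point is that $\SO(G') - \SO(G)$ collects a block of strictly positive terms $\sum_{t} \phi(x, \deg(t))$ coming from $u$, against which I must balance the contributions from $z$ and $w$. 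The contribution near $z$ and $w$ is $\bigl(\sqrt{\deg(w')^2 + 2^2} + \sqrt{2^2+1^2}\bigr) - \bigl(\sqrt{\deg(w')^2 + x^2}\bigr)$-type terms, where I carefully track the degree of $w$ and of $z$'s neighbour along $P_j$; here the hypothesis $x \geq y \geq 3$ and the maximality of the paths guarantee the endpoints involved have small degree (degree $1$ or $2$ on the path interior), which is what makes the $u$-block dominate.

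**The main obstacle and how to handle it.**

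The hard part will be the bookkeeping when $k=1$ or when $w$ itself has degree $2$ versus being a deeper path vertex, and especially the edge case where $u$ and $v$ might be adjacent or where $P_k$ and $P_j$ interact; I would dispose of these by noting the paths are \emph{maximal} pendent paths, so their internal vertices have degree exactly $2$ and their interaction with the branching vertices $u,v$ is clean. The genuine analytic content is showing the positive block $\sum_t \phi(x,\deg(t))$ from the degree drop at $u$ strictly exceeds the net change at the $z$/$w$ end. I expect to use the monotonicity of $\phi$ from Lemma \ref{lemChen}: $\phi(x,\cdot)$ is decreasing in its second argument and $\phi(\cdot, y)$ increasing in its first, together with $x \geq 3$, to bound each $\phi(x,\deg(t))$ below by a concrete positive constant such as $\phi(3,2) = \sqrt{13}-\sqrt{8}$, while bounding the gain at $z$ above by a comparable quantity. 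Once each surviving edge at $u$ is seen to lose at least this fixed positive amount and the endpoint reattachment gains strictly less, summing gives $\SO(G') - \SO(G) > 0$. I would conclude by remarking that strictness is automatic from the strict monotonicity in Lemma \ref{lemChen} since $x - 1 \geq 2 > 0$.
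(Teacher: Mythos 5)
Your identification of the affected edges is correct and matches the paper's decomposition: the $x-1$ surviving edges at $u$ each lose $\phi(x,\deg(t))>0$, the edge $uw$ (contribution $\sqrt{x^2+\deg_{G'}(w)^2}$) is replaced by $zw$ (contribution $\sqrt{4+\deg_{G'}(w)^2}$), and the edge joining $z$ to its neighbour on $P_j$ increases from $\sqrt{c^2+1}$ to $\sqrt{c^2+4}$, where $c=y$ if $j=1$ and $c=2$ if $j\geq 2$. However, the mechanism you propose for concluding positivity has a genuine flaw. You want to bound each $\phi(x,\deg(t))$ \emph{below} by a fixed positive constant such as $\phi(3,2)=\sqrt{13}-\sqrt{8}$ and let this block dominate the gain at the $z$/$w$ end. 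By Lemma \ref{lemChen}, $\phi$ is strictly \emph{decreasing} in its second argument, so $\phi(x,\deg(t))\leq \phi(x,2)$ whenever $\deg(t)\geq 2$, and $\phi(x,\deg(t))\to 0$ as $\deg(t)\to\infty$. Since $G'$ is an arbitrary graph, the neighbours of $u$ outside $P_k$ can have arbitrarily large degrees --- maximality of the pendent paths controls only the internal path vertices, not these neighbours --- so your claim that they ``have controlled degrees'' is unfounded, and no uniform positive lower bound on the $u$-block exists. Moreover, even granting such a bound, the comparison would not close: the reattachment gains at least $\sqrt{4+\deg_{G'}(w)^2}\geq\sqrt{5}$ from the new edge $zw$ alone, which already exceeds $(x-1)\phi(3,2)$ when $x=3$.

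The correct conclusion runs the other way around, and it is what the paper does: the $u$-block is strictly positive and is simply \emph{discarded}, and one shows that the boundary terms by themselves have positive net, i.e.
\begin{equation*}
\sqrt{x^2+\deg_{G'}(w)^2}-\sqrt{4+\deg_{G'}(w)^2}\;\geq\;\sqrt{c^2+4}-\sqrt{c^2+1},
\end{equation*}
which is exactly where the hypotheses $x\geq y\geq 3$ and the four-way case analysis on $k\in\{1\}\cup\{k\geq2\}$, $j\in\{1\}\cup\{j\geq2\}$ enter; for instance, when $k=j=1$ this reduces to showing $2\sqrt{y^2+1}-\sqrt{5}-\sqrt{y^2+4}>0$ for $y\geq 3$, which fails for $y=2$, so the degree hypotheses are genuinely needed. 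Your proposal gestures at this case analysis (``the bookkeeping when $k=1$ or when $w$ itself has degree $2$'') but never carries it out, and the one quantitative step it does assert is false. As written, the argument does not establish the lemma.
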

\begin{proof}
Let $a_1,\dots,a_{x-1}$ be the neighbors of $u$ other than $w$.

\textbf{Case 1:} Suppose that $k=j=1$. Then
\begin{align*}
\SO(G')-\SO(G)
&=\sum_{i=1}^{x-1}\left[\sqrt{x^2+\deg^2(a_i)}-\sqrt{(x-1)^2+\deg^2(a_i)}\right]+\sqrt{x^2+1}-\sqrt{4+1}\\
&+\sqrt{y^2+1}-\sqrt{y^2+4}
> \sqrt{x^2+1}-\sqrt{4+1}+\sqrt{y^2+1}-\sqrt{y^2+4}\\
&\geq f(y)=\sqrt{y^2+1}-\sqrt{4+1}+\sqrt{y^2+1}-\sqrt{y^2+4}>0,
\end{align*}
because $f'(y)=2y/\sqrt{y^2+1}-y/\sqrt{y^2+4}>0$ and $f(3)\approx 0.5>0.$

\textbf{Case 2:} Suppose that $k=1$ and $j\geq 2$. Then
\begin{align*}
\SO(G')-\SO(G)
&> \sqrt{x^2+1}-\sqrt{4+1}+\sqrt{4+1}-\sqrt{4+4}\\
&>\sqrt{9+1}-\sqrt{4+1}+\sqrt{4+1}-\sqrt{4+4}>0.
\end{align*}

\textbf{Case 3:} Suppose that $k\geq 2$ and $j=1$. Then
\begin{align*}
\SO(G')-\SO(G)
&> \sqrt{x^2+4}-\sqrt{4+4}+\sqrt{y^2+1}-\sqrt{y^2+4}\\
&>\sqrt{y^2+4}-\sqrt{4+4}+\sqrt{y^2+1}-\sqrt{y^2+4}
=-\sqrt{4+4}+\sqrt{y^2+1}>0.
\end{align*}

\textbf{Case 4:} Suppose that $k\geq 2$ and $j\geq 2$. Then
\begin{align*}
\SO(G')-\SO(G)
&> \sqrt{x^2+4}-\sqrt{4+4}+\sqrt{4+1}-\sqrt{4+4}\\
& \geq{9+4}-\sqrt{4+4}+\sqrt{4+1}-\sqrt{4+4}>0.1>0\\
\end{align*}
\end{proof}

%Let us denote by $C(G(D))$ the set of all edges $uv$ of $G(D)$, such that if $G_1$ and $G_2$ are the two component of $G(D)-uv$, then $||G_1|-|G_2||$ is as small as possible. Any two edges in $C(G(D))$ have to be adjacent.
%If $|G_1|\geq |G_2|$ and $u\in G_1$ (so that $v\in G_2$), then we denote $u$ by $r(G(D))$. If $|G_1|=|G_2|$, then $G_1$ and $G_2$ are isomorphic, so it would not matter if we take $u$ in $G_1$ or in $G_2$. If we consider $r(G(D))$ or an element of $C(G(D))$ as a root of $G(D)$, then for any vertex $z$ further from the root than $w$, $\deg(w)\geq \deg(z)$.
\begin{rem}
In Lemma \ref{Lem:WeakDiffDegGD} if $D$ (resp. $D'$) is the degree sequence of $G$ (resp. $G'$), then we have $D\LE D'$. So, if $G'=G(D')$, then we have a case where $D\LE D'$ and $\SO(G(D'))=\SO(G')>\SO(G)\geq \SO(G(D)).$ 
\end{rem}
Iterative use of Lemme \ref{Lem:WeakDiffDegGD} provides a shorter alternative proof to the following finding from \cite{Wei2022}, on unicyclic graphs with given girth.
Let $T^k_n$ denote the unicyclic obtained by merging a vertex from a cycle of length $k$ to an end of a path of length $n-k+1$.
\begin{thm}[\cite{Wei2022}]
\label{Lem:Unicyclic}
Among all unicyclic graphs $U$ with girth $k$ and order $n$, we have 
\begin{align*}
\SO(U)\geq \SO(T^k_n).
\end{align*}
\end{thm}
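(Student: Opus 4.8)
The plan is to show that every unicyclic graph $U$ with girth $k$ and order $n$ can be reduced to $T^k_n$ by a finite sequence of applications of Lemma~\ref{Lem:WeakDiffDegGD}, each strictly decreasing the Sombor index. Since the process always terminates at $T^k_n$, chaining the resulting inequalities yields $\SO(U)\ge\SO(T^k_n)$ (in fact with equality exactly when $U=T^k_n$, a bonus the theorem does not demand).

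First I would record the structural facts I rely on. A unicyclic graph with girth $k$ has a single cycle $C$ of length $k$, and deleting the edges of $C$ leaves a forest hanging off the cycle vertices. Each leaf of $U$ is the free endpoint of a unique maximal pendent path, and each maximal pendent path has a unique leaf, so the number of leaves equals the number of maximal pendent paths. Moreover the attachment vertex of a maximal pendent path cannot have degree $2$ (otherwise the path would extend), and it cannot be a leaf (no component is a bare path, since $C$ is present), so it has degree $\ge 3$. Thus the hypothesis $x\ge y\ge 3$ of Lemma~\ref{Lem:WeakDiffDegGD} is automatically available once we label the two chosen attachment vertices so that the larger degree is $x$.

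Next I would run the descent. The move $G=G'-uw+zw$ deletes the bridge $uw$ and adds the bridge $zw$, merely reattaching a pendent subpath; hence it preserves connectivity, keeps the graph unicyclic, and leaves the cycle $C$ untouched, so the girth $k$ and the order $n$ are preserved. After the move the former leaf $z$ becomes an internal vertex of degree $2$ and no new leaf is created, so the number of leaves drops by exactly one. Consequently, as long as $U$ has at least two leaves it has at least two maximal pendent paths; picking two of them and orienting the labels so that $x\ge y\ge 3$, Lemma~\ref{Lem:WeakDiffDegGD} applies and strictly lowers $\SO$ while decreasing the leaf count. Iterating, after finitely many strict steps we reach a unicyclic graph of girth $k$ and order $n$ with exactly one leaf; such a graph has exactly one maximal pendent path and no branching in its hanging forest, hence is $C_k$ with a single path attached, i.e. $T^k_n$ (the attachment vertex being immaterial up to isomorphism).

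The main obstacle I anticipate is not the descent itself but the bookkeeping that guarantees Lemma~\ref{Lem:WeakDiffDegGD} is genuinely applicable at every step while $U\neq T^k_n$: one must confirm that two suitable maximal pendent paths always exist, that the reattachment never shortens or merges the cycle, and especially that the degenerate case where both chosen paths hang from a common vertex (forcing $x=y$ and dropping that vertex's degree upon the move) is covered. This last case may require a short separate verification that the Sombor-index computation underlying Lemma~\ref{Lem:WeakDiffDegGD} still goes through when $u=v$.
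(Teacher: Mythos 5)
Your proposal is correct and follows essentially the same route as the paper, whose entire proof is the one-line observation that Lemma~\ref{Lem:WeakDiffDegGD} can be applied repeatedly until $T^k_n$ is reached. The bookkeeping you add (preservation of girth and order, termination via the strictly decreasing leaf count, and especially the degenerate case $u=v$ of two pendent paths sharing an attachment vertex, which the computation in the paper's proof of Lemma~\ref{Lem:WeakDiffDegGD} does not literally cover but which does go through since the attachment degree $x\geq 3$ then drops to $x-1\geq 2$) is entirely omitted by the paper, so your caution there identifies a real, if repairable, oversight rather than introducing a gap of your own.
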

\begin{proof}
As long as $U\neq T^k_n$, we can apply Lemma \ref{Lem:WeakDiffDegGD} to obtain a new unicyclic graph with girth $k$ order $n$ and smaller Sombor index. 
\end{proof}

Next, we study the effect of transferring more branches to a leaf. 
\begin{thm}
\label{Th:DiffDEgGreedy}
Let $u$ be a vertex of degree $x\geq 3$ and $v$ be a leaf of a rooted tree $T$. Let $T'$ be the tree obtained from $T$ by removing a branch $C$ from $u$ and attach it to $v$.Then,
$\SO(T)>\SO(T')$, 
%Let $u$ and $v$ be vertices of a rooted tree 
%$T$ %rooted at $r(G(D))$ or at an %https://www.overleaf.com/project/636cbba50beef119a363b16belement %of $C(G(D))$,  
%with degree $x$ and $y$ respectively. %\textcolor{red}{that are %at the same level}. Let $a$ and $b$ be the degrees of the parents of $u$ and $v$, respectively. Let $\{a_1,\dots,a_{x-1}\}$ and $\{b_1,\dots,b_{y-1}\}$ be the multisets of degrees of the children of $u$ and $v$, respectively.
%Suppose that $x\geq y+2$. If $y=1$,
%then we have
%$\SO(T)>\SO(T')$, where $T'$ is obtained from $G$ by removing a branch $C$ from $u$ and attach it to $v$.
\end{thm}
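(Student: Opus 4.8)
The plan is to write $\SO(T)-\SO(T')$ as a sum over the finitely many edges whose endpoint degrees change and to show that this sum is positive. Detaching the branch $C$, rooted at a child $c$ of $u$, and re-attaching it at $v$ alters only the degree of $u$ (from $x$ to $x-1$) and the degree of $v$ (from $1$ to $2$): the root $c$ keeps its degree, since it merely trades the edge $uc$ for the edge $vc$, the interior of $C$ is untouched, and every other vertex is unaffected. Thus only the edges incident to $u$, the relocated edge at $c$, and the single edge at the former leaf $v$ enter the difference.

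First I would dispose of the case where $v$ is a neighbour of $u$. Writing $N_T(u)=\{c,v,a_1,\dots,a_{x-2}\}$, the edge $uv$ changes its endpoint degrees from $(x,1)$ to $(x-1,2)$, and since $(x-1)^2+4-(x^2+1)=4-2x\le-2<0$ for $x\ge 3$ its contribution drops; the edges $ua_i$ drop because $\de(u)$ decreases; and the relocated edge $uc$ becomes $vc$, changing from $(x,\de(c))$ to $(2,\de(c))$, which also drops. Every changed term then contributes positively to $\SO(T)-\SO(T')$, settling this case.

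The substantive case is $v\notin N_T(u)$. Let $v^0$ be the neighbour of $v$ and put $p:=\de(v^0)$; connectivity together with $\de(u)\ge 3$ forces $p\ge 2$. With $\phi$ the function of Lemma~\ref{lemChen} I would record
\begin{align*}
\SO(T)-\SO(T')&=\sum_{i=1}^{x-1}\phi\big(x,\de(a_i)\big)+\Big[\sqrt{x^2+\de(c)^2}-\sqrt{4+\de(c)^2}\Big]\\
&\hspace{3cm}-\Big[\sqrt{4+p^2}-\sqrt{1+p^2}\Big].
\end{align*}
By Lemma~\ref{lemChen} each $\phi(x,\de(a_i))>0$, and $x\ge 3>2$ makes the $c$-bracket positive; the last bracket is the only adverse term. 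It remains to show that the gain from the $x-1$ edges at $u$ together with the relocated edge dominates this single loss.

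I expect this final estimate to be the main obstacle. The adverse term $\sqrt{4+p^2}-\sqrt{1+p^2}$ is decreasing in $p$, so it is largest, equal to $\sqrt{8}-\sqrt{5}$, when $p=2$; meanwhile Lemma~\ref{lemChen} gives $\phi(x,\de(a_i))\ge\phi(3,\de(a_i))$ and shows every favourable bracket shrinks as the neighbouring degrees grow. The delicate point is exactly that if $c$ and all the $a_i$ have large degree the favourable terms become arbitrarily small, so a bare appeal to $x\ge 3$ cannot by itself close the gap; I would try to exploit the multiplicity of favourable edges ($x-1\ge 2$ of them, plus the $c$-edge), and would scrutinise whether the statement needs an extra hypothesis — for instance controlling the degree of the branch root $c$ or of $v^0$ — entering precisely at this step, since this is where the inequality is tightest.
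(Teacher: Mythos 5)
Your edge-by-edge decomposition is exactly the paper's own: the paper's proof consists of the same display (with the parent term $\phi(x,a)$ split off from your sum $\sum_{i=1}^{x-1}\phi\bigl(x,\de(a_i)\bigr)$, and with $b$ denoting your $p$), and your separate treatment of the case $v\in N_T(u)$, which the paper does not mention at all, is correct. The divergence is at the final step. Where you stop and ask whether an extra hypothesis is needed, the paper simply declares the expression positive ``since $x\geq 3$ and $b\geq a$'', where $a$ is the degree of the parent of $u$ and $b$ the degree of the parent of $v$. Under that condition the argument does close, since Lemma~\ref{lemChen} gives $\phi(x,a)\geq\phi(3,a)>\phi(2,a)\geq\phi(2,b)$ and every remaining term is positive; but the condition $b\geq a$ appears nowhere in the statement of the theorem. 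In other words, the paper's proof silently assumes precisely the kind of extra hypothesis you suspected was missing.

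Your suspicion is justified, and the gap at the end of your proposal cannot be filled: the theorem as stated is false. Root $T$ at a vertex $P$ of degree $20$ whose children are $u$, a vertex $v^0$ of degree $2$, and $17$ leaves; let $v$ be the unique child of $v^0$ (a leaf), and let $u$ have exactly two children $w_1,w_2$, each of degree $20$, all of whose children are leaves. Then $x=\de(u)=3$, and moving $C=T_{w_1}$ from $u$ to $v$ changes only four edge contributions: the edges $Pu$ and $uw_2$ and the relocated edge all pass from degree pair $(3,20)$ to $(2,20)$, while $v^0v$ passes from $(2,1)$ to $(2,2)$, so that
\begin{equation*}
\SO(T)-\SO(T')=3\left(\sqrt{409}-\sqrt{404}\right)+\sqrt{5}-\sqrt{8}\approx 0.372-0.592<0,
\end{equation*}
contradicting the claimed inequality. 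This realises, with concrete numbers, exactly the limiting scenario you describe (all neighbours of $u$ of large degree, $p=2$). So your write-up is the correct analysis of the situation: the statement needs an additional hypothesis such as $\de(v^0)\geq \de(\text{parent of }u)$, and once it is added your argument finishes in the same way as the paper's; without it, both your proof and the paper's necessarily fail, and the failure propagates to the paper's final corollary, which applies this theorem to arbitrary trees.
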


\begin{proof}
Let $c$ be the degree of the root of $C$.
%
%Let $D=(d_1,\dots,d_n)$be a degree sequence of a tree.
%Consider the greedy tree $G:=G(D)$ as rooted at the vertex or at an edge, in such a way that a vertex closer to the root always has a degree at least equal to that of any vertex further from the root. Suppose that $v$ and $s$ are vertices at the same level $k$ (of distance $k-1$ from the root), as far from the root as possible that satisfy $\deg(v)=x\geq y+2=\deg(s)+2$. This means that from level $k+1$ and further away from the root, the difference of degrees of vertices at the same level is at most $1$.
%
%Let $T$ be the tree obtained from $G(G)$ by moving a branch $B$ of $v$ and attach it to $s$. Let $c$ be degree of the root of $B$ as a vertex of $G$. Let $a$ and $b$ be the degrees of the immediate parents of $v$ and $s$, respectively. Then $a\geq b$. Let $x-y=t$. 
As in \eqref{Eq:moving}, we have
%\begin{align*}
%&\SO(T)-\SO(T')\\
%    &=\phi(x,a)-\phi(y+1,b)+\sum_{j=1}^{x-2}\phi(x,a_j)-\sum_{j=1}^{y-1}\phi(y+1,b_j)+\sqrt{x^2+c^2}-\sqrt{(y+1)+c^2}\\
%    &\geq \phi(y+2,a)-\phi(y+1,b)+\sum_{j=y}^{x-2}\phi(x,a_j)+\sum_{j=1}^{y-1}\phi(y+2,a_j)-\phi(y+1,b_j)+\phi(y+2,c)\\
%    &\geq \phi(y+2,a)+\sum_{j=1}^{y-1}\phi(y+2,a_j)-\phi(y+1,b_j)+\phi(y+2,c)\text{ (since, $y+1\leq x$ and $b\geq a_j$)}.
%\end{align*}
\begin{align*}
\SO(T)-\SO(T')=\phi(x,a)-\phi(2,b)+\sum_{j=1}^{x-2}\phi(x,a_j)+\sqrt{x^2+c^2}-\sqrt{4+c^2}>0,
\end{align*}
since $x \geq 3$ and $b \geq a$.
%If $y=1$, then 
%$$
%\SO(T)-\SO(T') \geq \phi(y+2,a)+\phi(y+2,c)>0.
%$$
\end{proof}

\begin{rem}
In Theorem \ref{Th:DiffDEgGreedy}, the degree sequence of $T$ majorises that of $T'$. In the special case where $T=G(D)$, $D'$ is the degree sequence of $T'$, we obtain a case where $D'\LE D$ and
$$
\SO(G(D))=\SO(T)>\SO(T')\geq \SO(G(D')).
$$
\end{rem}

Even though \cite{chen2022} already gave a description of a tree with given order and number of branching vertex, we provide a slightly stronger results as an immediate consequences of Theorem \ref{Th:DiffDEgGreedy}.
\begin{cor}
For any trees $T$ of order $n$ and at least $k$ branching vertices, we have
$$
\SO(T)\geq \SO(G(D)),
$$
where $D=(3,\dots,3,2,\dots,2,1\dots,1)$ and the entry $3$ repeats $k$ times.
\end{cor}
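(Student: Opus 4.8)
The plan is to transform an arbitrary tree in the class into one with degree sequence $D$ using only the branch-transfer of Theorem~\ref{Th:DiffDEgGreedy}, which strictly lowers $\SO$, and then invoke Theorem~\ref{greedy}. Write $\mathcal{C}_k$ for the (finite) set of $n$-vertex trees with at least $k$ branching vertices, where a \emph{branching vertex} is a vertex of degree at least $3$. The first observation I would record is purely about counting: if a tree has exactly $k$ vertices of degree $3$ and all remaining vertices have degree $1$ or $2$, then the Handshake Lemma forces its degree sequence to be precisely $D=(3^k,2^{\,n-2-2k},1^{\,k+2})$. In particular $G(D)\in\mathcal{C}_k$, and it suffices to show that every $T\in\mathcal{C}_k$ can be reduced, by strictly $\SO$-decreasing steps, to a member of $\mathbb{T}_D$.

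Starting from $T\in\mathcal{C}_k$, I would proceed in two phases, each applying Theorem~\ref{Th:DiffDEgGreedy}. For any branching vertex $u$ I root $T$ at $u$, so that the branches of $u$ are exactly its child subtrees; after choosing one such branch $C$, the remaining $\deg(u)-1\geq 2$ branches each contain a leaf, so a leaf $v$ outside $C$ is always available for the transfer. In the first phase I repeat this while some branching vertex $u$ has $\deg(u)=x\geq 4$: transferring a branch from $u$ to a leaf $v$ strictly decreases $\SO$, leaves $u$ of degree $x-1\geq 3$, raises $v$ from degree $1$ to $2$, and does not alter $C$ internally, so the number of branching vertices is unchanged and the tree remains in $\mathcal{C}_k$. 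Since the nonnegative integer $\sum_{w:\,\deg(w)\geq 4}(\deg(w)-3)$ strictly decreases at each step, this phase terminates at a tree all of whose branching vertices have degree exactly $3$.

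In the second phase, while the number $m$ of branching vertices exceeds $k$, I apply the same transfer at a degree-$3$ branching vertex $u$: now $u$ drops to degree $2$ and a leaf rises to degree $2$, no vertex of degree $\geq 4$ is created, and $\SO$ strictly decreases, so $m$ decreases by one; because $m>k$ the tree is still in $\mathcal{C}_k$. Iterating down to $m=k$ produces a tree $T^\ast$ with exactly $k$ branching vertices, all of degree $3$, hence $T^\ast\in\mathbb{T}_D$ and $\SO(T)\geq\SO(T^\ast)$. Theorem~\ref{greedy} then gives $\SO(T^\ast)\geq\SO(G(D))$, and combining the two inequalities yields $\SO(T)\geq\SO(G(D))$.

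Every individual step is routine once Theorem~\ref{Th:DiffDEgGreedy} and Theorem~\ref{greedy} are granted; the two points that genuinely need checking are that a leaf $v$ outside the transferred branch $C$ always exists (handled by rooting at $u$ and using that $u$ retains at least two other branches) and that each operation keeps the branching count at least $k$. The main, though mild, obstacle is the bookkeeping of the branching-vertex count across the phases together with termination, which the monovariant $\sum_{w:\,\deg(w)\geq 4}(\deg(w)-3)$ controls in the first phase and the strictly decreasing $m$ controls in the second.
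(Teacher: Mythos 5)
Your proof is correct and follows essentially the same route as the paper: two phases of the branch-transfer from Theorem~\ref{Th:DiffDEgGreedy} (first eliminating vertices of degree at least $4$, then reducing the number of branching vertices to exactly $k$), followed by Theorem~\ref{greedy} applied to the resulting tree with degree sequence $D$. Your version merely makes explicit details the paper leaves implicit, such as the Handshake-Lemma count showing the terminal degree sequence is exactly $D$, the existence of a leaf outside the transferred branch, and the monovariants guaranteeing termination.
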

\begin{proof}
As long as there is a vertex of degree greater than $3$, by Theorem \ref{Th:DiffDEgGreedy}, we can move more branches from it to be attached to a leaf and obtain a tree that still have the same number of branching vertices and of which Sombor index is not increasing.

If the tree has more than $k$ branching vertices, we can again apply Theorem \ref{Th:DiffDEgGreedy} transforming a branching vertex into a vertex of degree $2$ by removing one branch from it and move it to a leaf.
\end{proof}

\bibliographystyle{abbrv} 
\bibliography{Sombor}
\end{document}